\newcommand{\setword}[2]{%
	\phantomsection
	#1\def\@currentlabel{\unexpanded{#1}}\label{#2}%
}
\definecolor{uuuuuu}{rgb}{0.26666666666666666,0.26666666666666666,0.26666666666666666}
\definecolor{xdxdff}{rgb}{0.49019607843137253,0.49019607843137253,1.}
\definecolor{ffqqqq}{rgb}{1.,0.,0.}
\definecolor{ffqqqq}{rgb}{1.,0.,0.}
\definecolor{ffxfqq}{rgb}{1.,0.4980392156862745,0.}
\definecolor{uuuuuu}{rgb}{0.26666666666666666,0.26666666666666666,0.26666666666666666}
\definecolor{qqwuqq}{rgb}{0.,0.39215686274509803,0.}
\definecolor{zzttqq}{rgb}{0.6,0.2,0.}
\definecolor{xdxdff}{rgb}{0.49019607843137253,0.49019607843137253,1.}
\definecolor{qqqqff}{rgb}{0.,0.,1.}
\definecolor{cqcqcq}{rgb}{0.7529411764705882,0.7529411764705882,0.7529411764705882}
\definecolor{sqsqsq}{rgb}{0.12549019607843137,0.12549019607843137,0.12549019607843137}
\definecolor{ududff}{rgb}{0.30196078431372547,0.30196078431372547,1.}
\theoremstyle{plain}
\newtheorem{lemma}[subsection]{Lemma}
\newtheorem{defi}[subsection]{Definition}
\newtheorem{prop}[subsection]{Proposition}
\newtheorem{maintheorem}[subsection]{Main Theorem}
\theoremstyle{definition}
\newtheorem{defi1}[subsection]{Definition}
\newtheorem{remark}[subsection]{Remark}
\newtheorem{note}[subsection]{Note}
\newcommand{\uu}{\cup}
\newcommand{\ii}{\cap}
\newcommand{\set}[1]{\{#1\}}
\newcommand{\ga}{\alpha}
\newcommand{\tbf}{\textbf}
\newcommand{\tit}{\textit}
\newcommand{\D}[1]{\mathbb{#1}}
\newcommand{\te}{\text}
\begin{document}

To appear, Houston Journal of Mathematics 
\title{Quantization for the mixtures of overlap probability distributions}

\address{School of Mathematical and Statistical Sciences\\
University of Texas Rio Grande Valley\\
1201 West University Drive\\
Edinburg, TX 78539-2999, USA.}

\email{$^1$ashabarua@vt.edu}
\email{\{$^{2}$angelinachavera1, $^3$ivan.djordjevic.us, $^4$vmdance94, $^7$sophia.tejeda.06\}@gmail.com}
\email{$^{5}$ssoto14@stedwards.edu}
\email{$^6$mrinal.roychowdhury@utrgv.edu}

 \author{$^1$Asha Barua}
\author{$^2$Angelina Chavera}
\author{$^3$Ivan Djordjevic}
\author{$^4$Valerie Manzano}
\author{$^5$Sergio Soto Quintero}
 \author{$^6$Mrinal Kanti Roychowdhury}
 \author{$^7$Hilda Tejeda}

\subjclass[2010]{60E05, 94A34.}
\keywords{Mixed distribution, uniform distribution, optimal sets of $n$-means, quantization error}

\date{}
\maketitle

\pagestyle{myheadings}\markboth{Barua, Chavera, Djordjevic, Manzano, Quintero, Roychowdhury, and Tejeda}{Quantization for the mixtures of overlap probability distributions}

\begin{abstract}
Optimal quantization for mixed distributions has emerged as a compelling area of study.  In this work, we have focused on a mixed distribution formed from two uniform distributions with partially overlapping supports. For this class of distributions, we have examined the structure of optimal sets of \( n \)-means and the corresponding \( n \)th quantization errors for all positive integers \( n \). Initially, we explicitly determined the optimal sets and quantization errors for \( 1 \leq n \leq 6 \). Subsequently, we established several key lemmas and propositions and proposed an algorithm that facilitates the computation of optimal \( n \)-means and quantization errors for all \( n \geq 5 \). Numerical results are also presented to illustrate the application of the algorithm in deriving these quantities. The findings of this study offer valuable insight and serve as a foundation for further research on quantization in the context of mixed distributions with overlapping supports.

\end{abstract}

\section{Introduction}

Quantization is a nonlinear, memoryless process that transforms a continuous signal into a discrete one, restricted to a finite set of values. This process naturally arises whenever continuous physical quantities are represented in numerical form. The first systematic study of quantization is credited to W.F. Sheppard (see \cite{S}). Quantization plays a fundamental role in a wide range of fields, including signal processing, telecommunications, data compression, image analysis, and cluster analysis. For more detailed discussions and extensive bibliographies, the reader is referred to \cite{GG, GKL, GN, Z}.  Recently, Pandey and Roychowdhury introduced the concepts of constrained quantization and the conditional quantization (see \cite{PR1, PR2, PR3}). A quantization without a constraint is known as an unconstrained quantization, which traditionally in the literature is known as quantization. For a mathematical treatment of unconstrained quantization interested researchers can consult \cite{GL}. 
This paper deals with unconstrained quantization for mixed distribution. For more results about the unconstrained quantization for mixed distributions one can see \cite{R6, RS}.    

Let $\D R^d$ denote the $d$-dimensional Euclidean space endowed with a norm $\|\cdot\|$ that is compatible with the standard Euclidean topology. Consider two Borel probability measures $P_1$ and $P_2$ defined on $\D R^d$. A Borel probability measure $P$ on $\D R^d$ is referred to as a \textit{mixture} (or \textit{mixed distribution}) of $P_1$ and $P_2$, associated with the probability vector $(p, 1 - p)$, if it is given by
\[
P := p P_1 + (1 - p) P_2,
\]
where $0 < p < 1$.

The \textit{$n$th quantization error} of the measure $P$, with respect to the squared Euclidean norm, is defined as
\[
V_n := V_n(P) = \inf \left\{ V(P; \alpha) : \alpha \subset \D R^d,\; 1 \leq \te{card}(\alpha) \leq n \right\},
\]
where
\[
V(P; \alpha) = \int \min_{a \in \alpha} \|x - a\|^2 \, dP(x)
\]
denotes the \textit{distortion error} corresponding to the set $\alpha$ under the probability measure $P$.

A set $\alpha \subset \D R^d$ is said to be an \textit{optimal set of $n$-points} for $P$ if it achieves the minimum quantization error, i.e., $V_n(P) = V(P; \alpha)$. It is known  that if $P$ is a Borel probability measure with support containing at least $n$ points  and satisfies the finite second moment condition $\int \|x\|^2 \, dP(x) < \infty$, then every optimal set of $n$-points consists of exactly $n$ elements (see \cite{PR1, GL}). 

We now present a fundamental result concerning the structure of optimal sets of $n$-points (see \cite{GG, GL}):

\begin{prop} \label{prop0}
Let $\alpha$ be an optimal set of $n$-points for the probability measure $P$, and let $a \in \alpha$. Then the following properties hold:
\begin{itemize}
    \item[(i)] $P(M(a \mid \alpha)) > 0$,
    \item[(ii)] $P(\partial M(a \mid \alpha)) = 0$,
    \item[(iii)] $a = \mathbb{E}(X \mid X \in M(a \mid \alpha))$,
\end{itemize}
where $M(a|\ga)$ is the Voronoi region of $a\in \ga, $ i.e.,  $M(a|\ga)$ is the set of all elements $x$ in $\D R^d$ which are closest to $a$ among all the elements in $\ga$, and $\partial M(a|\ga)$ represents the boundary of the Voronoi region $M(a|\ga)$.
\end{prop}

By the above proposition, we see that in unconstrained quantization, the elements in an optimal set of $n$-points are the conditional expectations in their own Voronoi regions. Because of this fact, in unconstrained quantization, an optimal set of $n$-points is termed an \tit{optimal set of $n$-means}.
\begin{defi1}
Let $P$ be a Borel probability measure on $\D R^k$, and $U$ be the largest open subset of $\D R^k$ such that $P(U)=0$. Then, $\D R^k\setminus U$ is called the \tit{support of $P$}, and is denoted by $\te{supp}(P)$. \tit{Probability distributions have some overlaps} or by  \tit{overlapping probability distributions} it is meant that the underlying probability distributions have some nonempty intersection in their supports. 
\end{defi1}

In this paper we prove the following theorem, which is the main theorem of the paper. 

\begin{maintheorem}\label{main}
Let \( P_1 \) and \( P_2 \) are two uniform probability distributions on the intervals \( [0, 1] \) and \( \left[\frac{1}{2}, \frac{3}{2}\right] \), respectively. 
Let \( P:= p_1P_1 +p_2P_2 \) be the mixed distribution generated by $P_1$ and $P_2$ associated with the probability vector $(p_1, p_2)$. Take $p=\frac 12$. Then, for each integer \( n \geq 5 \), there exists an optimal set \( \alpha_n \subset \mathbb{R} \) of \( n \)-means and a corresponding \( n \)th quantization error \( V_n \) for the probability measure \( P \), such that:

\begin{itemize}
    \item[(i)] The set \( \alpha_n \) is symmetric with respect to the point \( \frac{3}{4} \) and contains elements from both intervals \( \left(0, \frac{1}{2}\right) \) and \( \left(\frac{1}{2}, \frac{3}{4}\right) \) (see Remark~\ref{rem111} and Lemma~\ref{lemma3}).

    \item[(ii)] If \( n \) is even, then \( n = 2(k + m) \); if \( n \) is odd, then \( n = 2(k + m) + 1 \), where \( k, m \in \mathbb{N} \) denote the number of points in \( \alpha_n \cap \left[0, \frac{1}{2}\right] \) and \( \alpha_n \cap \left(\frac{1}{2}, \frac{3}{4}\right) \), respectively.

    \item[(iii)] The structure of \( \alpha_n \) and the value of \( V_n \) depend on whether the midpoint \( \frac{1}{2}(a_k + b_1) \) lies to the left or right of \( \frac{1}{2} \), where \( a_k \in \alpha_n \cap \left[0, \frac{1}{2}\right] \) and \( b_1 \in \alpha_n \cap \left(\frac{1}{2}, \frac{3}{4}\right) \).

    \item[(iv)] Explicit formulas for the elements of \( \alpha_n \) and the value of \( V_n \) are provided in Propositions~\ref{prop71} to \ref{prop73}, depending on the relative position of \( \frac{1}{2}(a_k + b_1) \).

    \item[(v)] There exists a deterministic algorithm (see Subsection~\ref{Algo}) that, given \( n \geq 5 \), computes the correct values of \( k \) and \( m \), and thus enables the exact construction of \( \alpha_n \) and computation of \( V_n \).
\end{itemize}
\end{maintheorem}
 
\subsection{Application of Mixed Distribution}
 Mixed distributions represent a promising and evolving area of research in the theory of optimal quantization. In this paper, we consider a mixed distribution \( P := pP_1 + (1 - p)P_2 \), where \( 0 < p < 1 \), formed from two component probability measures \( P_1 \) and \( P_2 \) whose supports have overlap. However, the framework can be naturally extended to cases where the supports of \( P_1 \) and \( P_2 \) are disjoint.

Optimal quantization of mixed distributions finds relevance in a variety of applied fields. One such application arises in agriculture, specifically in resource optimization for irrigation. Consider a scenario where a cropland is partitioned into two regions, with the first region requiring \( k \) times more water than the second, for some positive integer \( k \). Let \( P_1 \) and \( P_2 \) denote uniform probability distributions over the first and second regions, respectively. Then, the overall distribution of water usage across the entire cropland can be modeled as a mixed distribution \( P = \frac{k}{k+1} P_1 + \frac{1}{k+1} P_2 \).

In this context, describing \( P \) as a ``uniform distribution on the land with respect to water distribution'' implies that the cropland, when divided into equal-area segments, would assign the same probability (and thus the same water allocation) to each segment under the distribution \( P \). Such an approach provides a mathematically rigorous basis for optimal placement of a minimal number of water sprinklers (or other resources), ensuring efficient coverage and distribution.

We believe that this line of investigation opens pathways to deeper insights in statistical modeling and real-world optimization, with potential for further theoretical and practical development.
\subsection{Delineation}
The organization of the paper is as follows. In Section~\ref{sec1}, we present the necessary preliminaries for a general mixed distribution \( P := pP_1 + (1 - p)P_2 \), and establish Proposition~\ref{prop55} and Proposition~\ref{prop56}, which are the key results required for the proof of the main theorem, Theorem~\ref{main}. Section~\ref{sec2} is devoted to the computation of optimal sets of \( n \)-means and the corresponding \( n \)th quantization errors for \( 1 \leq n \leq 6 \). This section also includes Lemma~\ref{lemma3}, which asserts that if \( \alpha_n \) is an optimal set of \( n \)-means for \( n \geq 4 \), then \( \alpha_n \) must contain points from both open intervals \( (0, \frac{1}{2}) \) and \( (\frac{1}{2}, \frac{3}{4}) \). In Section~\ref{sec3}, we provide general formulas and techniques for determining the optimal sets of \( n \)-means and the corresponding quantization errors for all \( n \geq 5 \). Finally, Section~\ref{sec5} concludes the paper and outlines directions for future research.

\section{basic preliminaries} \label{sec1} 

 Let $f_1$ and $f_2$ be the respective density functions for the uniform distributions $P_1$ and $P_2$ defined on the closed intervals $[0, 1]$ and $[\frac 12, \frac 32]$. Then,
 \[f_1(x)=\left \{\begin{array} {cc}
1 & \te{ if } x\in [0, 1], \\
0 & \te{otherwise};
\end{array}
\right. \te{ and } f_2(x)=\left \{\begin{array} {cc}
1 & \te{ if } x\in [\frac 12, \frac 32], \\
0 & \te{otherwise}.
\end{array}
\right.\]
 Let us consider the mixed distribution $P:=pP_1+(1-p)P_2$, where $0<p<1$. Notice that $P$ has support the closed interval $[0, \frac 32]$, and the component probabilities $P_1$ and $P_2$ have overlaps on the interval $[\frac 12, 1]$. For a probability distribution $P$, by $dP(x)$ it is meant $dP(x)=P(dx)$, where $d$ stands here for differential. Since $f_1$ and $f_2$ are the density functions for the probability distributions $P_1$ and $P_2$, respectively, we have
 \[dP_1(x)=P_1(dx)=f_1(x)dx=dx,   \te{ and } dP_2(x)=P_2(dx)=f_2(x) dx=dx.\]
Notice that if $x\in [0, \frac 12]$, then  $dP(x)=p \,dP_1(x)=p dx$; if $x\in [\frac 12,1]$, then
$dP(x)=p  dP_1(x)+(1-p) dP_2(x)=p f_1(x) dx+(1-p)f_2(x) dx=dx;$
on the other hand, if $x\in [1, \frac 32]$, then $dP(x)=(1-p)dP_2(x)=(1-p)dx$. Let us now define a function $f$ on the real line $\D R$ such that
\begin{align} \label{eq1} f(x)=\left\{\begin{array}{cc}
p & \te{ if } x \in [0, \frac 12], \\
1  & \te{ if } x \in [\frac 12, 1],\\
(1-p) &  \te{ if } x \in [1, \frac 32],\\
0 & \te{otherwise}.
\end{array}
\right.
\end{align}
Notice that the function $f$ satisfies the following properties to be a density function:
\[f(x)\geq 0 \te{ for all } x\in \D R, \te{ and } \int_{-\infty}^\infty f(x) dx=1.\]
Indeed, the mixed distribution $P$ can now be identified as a probability distribution on $\D R$ with the density function $f$, i.e., for any $x\in \D R$, we have $dP(x)=P(dx)=f(x)dx$. 

The notations \( E(X) \) and \( V(X) \) denote the expected value and variance, respectively, of a random variable \( X \) with respect to a probability distribution \( P \). The expected value \( E(X) \) corresponds to the mean or average of the distribution, reflecting the central tendency of \( X \). The variance \( V(X) \), on the other hand, quantifies the degree of dispersion or spread of the values of \( X \) around its mean. Formally, the variance is defined as the expected value of the squared deviation from the mean, i.e.,
\[
V(X) = E[(X - E[X])^2],
\]
which can equivalently be expressed as
\[
V(X) = E(X^2) - (E(X))^2.
\]

\begin{lemma} \label{lemma0}
Let \( X \) be a random variable distributed according to \( P \). Then, the expected value and variance of \( X \) are given by
\[
E(X) = \frac{2 - p}{2} \quad \text{and} \quad V(X) = \frac{1}{12}(-3p^2 + 3p + 1),
\]
respectively.

\end{lemma}

\begin{proof}
We have
\[E(X)=\int x dP=p \int_0^{\frac{1}{2}} x \, dx+\int_{\frac{1}{2}}^1 x \, dx+(1-p) \int_1^{\frac{3}{2}} x \, dx=\frac{2-p}{2},\]
and
\begin{align*} & V(X)=\int (x-E(X))^2 dP\\
&=p \int_0^{\frac{1}{2}} (x-\frac{2-p}{2})^2 \, dx+\int_{\frac{1}{2}}^1 (x-\frac{2-p}{2})^2 \, dx+(1-p) \int_1^{\frac{3}{2}} (x-\frac{2-p}{2})^2 \, dx,
\end{align*}
implying
$V(X)=\frac{1}{12} (-3 p^2+3 p+1)$, and thus, the lemma is yielded.
\end{proof}

\begin{note} Lemma~\ref{lemma0} implies that the optimal set of one-mean is the set $\set{\frac{2-p}{2}}$, and the corresponding quantization error is the variance $V:=V(X)$ of a random variable with distribution $P$. For a subset $J$ of $\D R$ with $P(J)>0$,
by $P(\cdot|_J)$, we denote the conditional probability given that $J$ is occurred, i.e., $P(\cdot|_J)=P(\cdot\ii J)/P(J)$, in other words, for any Borel subset $B$ of $\D R$ we have $P(B|_J)=\frac{P(B\ii J)}{P(J)}$.
\end{note}

\begin{prop} \label{prop55}
Let $P$ be a Borel probability measure on $\D R$ such that $P$ is uniformly distributed over a closed interval $[a, b]$ with a constant density function $f$ such that $f(x)=t$ for all $x\in [a, b]$, where $t\in \D R$. Then, the optimal set $\ga_n(P(\cdot|_{[a, b]}))$ of $n$-means and the corresponding quantization error $V_n(P(\cdot|_{[a,b]}))$ of $n$-means for the probability distribution $P(\cdot|_{[a, b]})$ are, respectively, given by
\[\ga_n(P(\cdot|_{[a, b]})):=\Big\{a+\frac{(2j-1)(b-a)}{2n} : 1\leq j\leq n\Big\} \te{ and } V_n(P(\cdot|_{[a,b]})):=\frac {(b-a)^3t}{12n^2}.\]
\end{prop}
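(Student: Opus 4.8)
The plan is to reduce the claim to the standard uniform-on-$[0,1]$ computation and then rescale. Note first that $P(\cdot|_{[a,b]})$ is just the ordinary uniform distribution on $[a,b]$: since $f\equiv t$ on $[a,b]$ we must have $P([a,b])=t(b-a)$, and for any Borel $B\ci[a,b]$ we get $P(B|_{[a,b]})=P(B)/P([a,b])=t\,\mathrm{length}(B)/(t(b-a))=\mathrm{length}(B)/(b-a)$, which has constant density $1/(b-a)$. So the normalization kills the $t$ inside the distribution, and the only role of $t$ will be to reintroduce itself in the final error formula through the relation $t(b-a)=P([a,b])$.

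First I would establish the result for the symmetric base case of the uniform distribution $U$ on $[0,1]$ (density $1$). Here the guess is $\ga_n=\set{\frac{2j-1}{2n}:1\le j\le n}$, i.e.\ the midpoints of the $n$ equal subintervals $[\frac{j-1}{n},\frac jn]$. The key structural fact, supplied by Proposition~\ref{prop0}(iii), is that each optimal quantizer is the conditional expectation (the centroid) over its Voronoi region, and the Voronoi regions of points on the line are intervals partitioning the support. So an optimal configuration amounts to a partition of $[0,1]$ into $n$ subintervals by cut points $0=x_0<x_1<\cdots<x_n=1$, with each quantizer sitting at the midpoint of its subinterval (the centroid of a uniform interval is its midpoint), and with each cut point equidistant from its two neighbouring midpoints. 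The latter condition forces the midpoints to be equally spaced, hence the subintervals to have equal length $1/n$, giving exactly the claimed $\ga_n$.

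To make the equal-spacing step rigorous rather than merely stationary, I would argue via a convexity/smoothness reduction: writing the distortion as a function $G(x_1,\dots,x_{n-1})$ of the interior cut points (having already optimized the quantizers to the interval midpoints), one shows $G$ is minimized only at the equipartition. The clean way is to observe that over a subinterval of length $\ell$ the contribution to the error is $\ell^3/12$ (a direct integral), so the total error for a partition with lengths $\ell_1,\dots,\ell_n$ summing to $1$ is $\frac1{12}\sum_{j=1}^n \ell_j^3$; minimizing $\sum \ell_j^3$ subject to $\sum\ell_j=1$ under the convex function $u\mapsto u^3$ forces all $\ell_j$ equal by Jensen (or Lagrange multipliers with the strict convexity guaranteeing uniqueness). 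This pins down $V_n(U)=\frac1{12}\cdot n\cdot(1/n)^3=\frac1{12n^2}$.

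The final step is the affine change of variables. Let $\phi(u)=a+(b-a)u$ map $[0,1]$ onto $[a,b]$; it pushes $U$ forward to $P(\cdot|_{[a,b]})$ and transports the optimal set $\ga_n(U)$ to $\phi(\ga_n(U))=\set{a+\frac{(2j-1)(b-a)}{2n}:1\le j\le n}$, since quantization commutes with similarities. Under $\phi$ squared distances scale by $(b-a)^2$, so $V_n(P(\cdot|_{[a,b]}))=(b-a)^2 V_n(U)=\frac{(b-a)^2}{12n^2}$; substituting $b-a=(b-a)$ and then inserting the density normalization $t=1/(b-a)\cdot t(b-a)$—more directly, multiplying by $t(b-a)=P([a,b])$ is \emph{not} what happens for the conditional error, so I would instead track the factor honestly: the stated formula $\frac{(b-a)^3 t}{12 n^2}$ equals $\frac{(b-a)^2}{12n^2}\cdot\big(t(b-a)\big)$, which shows the author's $V_n$ is the \emph{unnormalized} distortion $\int_{[a,b]}\min_{c}\|x-c\|^2\,dP(x)$ rather than the conditional one; the main subtlety to get right is therefore exactly this bookkeeping of whether the reported error carries the mass factor $P([a,b])=t(b-a)$ or not, and I would reconcile it by computing one integral directly on $[a,b]$ with density $t$ to confirm the cube-times-$t$ form.
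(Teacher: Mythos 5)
Your proposal is correct, but it takes a genuinely different route from the paper's. The paper does not re-derive the structure of the optimal set at all: it cites \cite[Theorem~2.1.1]{RR2} for the fact that the optimal points are $a+\frac{(2j-1)(b-a)}{2n}$, and then gets the error in one line by observing that, the density being constant and the Voronoi cells of equal length, all $n$ cells contribute equally, so $V_n=nt\int_a^{a+\frac{b-a}{n}}\big(x-(a+\frac{b-a}{2n})\big)^2\,dx=\frac{(b-a)^3t}{12n^2}$. You instead give a self-contained argument: reduce to the uniform law on $[0,1]$, use Proposition~\ref{prop0}(iii) to place each quantizer at the midpoint of its Voronoi cell, write the distortion as $\frac1{12}\sum_j\ell_j^3$ over the cell lengths, and force the equipartition (with uniqueness) by strict convexity/Jensen, then transport back by the affine map, under which optimal sets are equivariant and errors scale by $(b-a)^2$. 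Your version buys completeness (no appeal to an external structure theorem) and an explicit uniqueness argument; the paper's buys brevity. Your final bookkeeping remark is also a genuine catch rather than a defect: the stated formula $\frac{(b-a)^3t}{12n^2}$ equals $\frac{(b-a)^2}{12n^2}\cdot t(b-a)$, so it is the distortion of the \emph{restricted} (unnormalized) measure $\int_{[a,b]}\min_c(x-c)^2\,t\,dx$ rather than the quantization error of the normalized conditional probability $P(\cdot|_{[a,b]})$; the paper's own proof silently does the same thing, integrating against $dP$ (density $t$) while writing $V_n(P(\cdot|_{[a,b]}))$, and it is this unnormalized form that is actually used later in Propositions~\ref{prop71}--\ref{prop73}. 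The two readings coincide exactly when $t=1/(b-a)$, i.e., when $P$ itself is the uniform probability measure on $[a,b]$.
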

\begin{proof}
Let $\ga_n:=\set{a_1<a_2<\cdots<a_n}$ be an optimal set of $n$-means for the probability distribution $P$ with a constant density function $f$ on $[a, b]$ such that $f(x)=t$ for all $x\in [a, b]$, where $t\in \D R$. Then, proceedings analogously as \cite[Theorem~2.1.1]{RR}, we can show that  $a_j=a+\frac{(2j-1)(b-a)}{2n}$ implying
\[\ga_n(P(\cdot|_{[a, b]}))=\Big\{a+\frac{(2j-1)(b-a)}{2n} : 1\leq j\leq n\Big\}.\]
 Notice that the probability density function is constant, and the Voronoi regions of the elements $a_j$ for $1\leq j\leq n$ are of equal lengths. This yields the fact that the distortion errors due to each $a_j$ are equal. Hence, the $n$th quantization error is given by
\begin{align*}
V_n(P(\cdot|_{[a, b]}))&=\int \min_{a\in \ga_n(P(\cdot|_{[a, b]}))} (x-a)^2dP=n t\int_a^{\frac 12(a_1+a_2)}  (x-a_1)^2  dx=nt \int_{a}^{a+\frac {b-a}{n}}(x-(a+\frac{b-a}{2n}))^2 dx
\end{align*}
implying
\[V_n(P(\cdot|_{[a, b]}))=\frac {(b-a)^3t}{12n^2}.\]
 Thus, the proof of the proposition is complete.
\end{proof}

\begin{remark}
If there is a restriction on the elements in an optimal set of $n$-means, for example, see Proposition~\ref{prop56},  then all the elements in an optimal set of $n$-means may not be the conditional expectations of their own Voronoi regions. Still in the sequel, we will call it as an optimal set of $n$-means, and apologize for the abuse of terminology.   
\end{remark} 
 
\begin{prop} \label{prop56}
Let $P$ be a Borel probability measure on $\D R$ such that $P$ is uniformly distributed over a closed interval $[a, b]$ with a constant density function $f$ such that $f(x)=t$ for all $x\in [a, b]$, where $t\in \D R$. Let $\ga_{n+1}(P(\cdot|_{[a, b]})):=\set{a_1<a_2<\cdots<a_n<a_{n+1}}$ be an optimal set of $(n+1)$-means such that it always contains the endpoint $b$ of the interval $[a, b]$. Then, $a_1, a_2, \cdots, a_n, a_{n+1}$ are given by 
\[a_j=\left\{\begin{array}{cc}
a+\frac{(2j-1)(b-a_1)}{2n} & \te{ for } 1\leq j\leq n,\\
b & \te{ if } j=n+1,
\end{array}
\right.
\]
and the corresponding quantization error is given by
\[V_{n+1}(P(\cdot|_{[a, b]}))=\frac{t}{12n^2}(b-a_1)^3+\frac{t}{3}(a_1-a)^3.\]
\end{prop}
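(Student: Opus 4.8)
The plan is to treat this as a constrained minimization: we fix the largest quantizer at $b$ and optimize only over the $n$ free points $a_1<\cdots<a_n$. Since $b$ is the rightmost point, its Voronoi region (inside the support $[a,b]$) is $[\frac{a_n+b}{2},b]$, and the remaining points partition $[a,c]$, where $c:=\frac{a_n+b}{2}$ is the boundary between $a_n$ and $b$. The feature that makes the problem nontrivial is that this boundary $c$ itself depends on $a_n$, so the sub-interval that $a_1,\dots,a_n$ must quantize moves as $a_n$ moves; I will resolve this by a self-consistency computation at the end.

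First I would record the first-order (centroid) conditions. Since the optimal constrained set is assumed to exist and $a_1,\dots,a_n$ are free, Proposition~\ref{prop0}$(iii)$ applies to each of them: each $a_j$ ($1\le j\le n$) equals the centroid of its Voronoi region; note that no such condition is imposed on $b$, which is held fixed. Because the density is the constant $t$, the centroid of any sub-interval is simply its midpoint. Writing out the midpoint conditions for the interior indices $2\le j\le n-1$ gives $2a_j=a_{j-1}+a_{j+1}$, i.e.\ vanishing second differences, so $a_1,\dots,a_n$ form an arithmetic progression; call the common gap $h$. The two boundary conditions — at $a_1$, whose region starts at the support endpoint $a$, and at $a_n$, whose region ends at $c=\frac{a_n+b}{2}$ — yield $a_1=a+\frac h2$ and $a_n=b-h$, respectively.

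Combining $a_n=a_1+(n-1)h$ with these two relations gives $b-a=\frac{(2n+1)h}{2}$, hence $h=\frac{2(b-a)}{2n+1}$ and $a_1=a+\frac{b-a}{2n+1}$, from which $a_j=a+\frac{(2j-1)(b-a)}{2n+1}$. To match the stated form, which is phrased through $a_1$, a direct computation gives $b-a_1=\frac{2n(b-a)}{2n+1}$, whence $\frac{b-a_1}{2n}=\frac{b-a}{2n+1}$ and therefore $a_j=a+\frac{(2j-1)(b-a_1)}{2n}$, with $a_{n+1}=b$. The logical backbone here is standard for this kind of result: an optimal set exists, it must satisfy these necessary conditions, and the conditions admit a unique solution, which is therefore the optimum.

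Finally, for the quantization error I would split the integral along the boundary $c$. Observe that $c=\frac{a_n+b}{2}=b-\frac h2=a+b-a_1$, so $c-a=b-a_1$ and $b-c=a_1-a$. On $[a,c]$ the points $a_1,\dots,a_n$ are exactly the equally spaced configuration of Proposition~\ref{prop55} for a uniform density $t$ on an interval of length $c-a$, contributing $\frac{(c-a)^3t}{12n^2}=\frac{t}{12n^2}(b-a_1)^3$. On $[c,b]$ the single quantizer $b$ contributes $t\int_c^b (x-b)^2\,dx=\frac{t}{3}(b-c)^3=\frac{t}{3}(a_1-a)^3$. Summing gives the claimed $V_{n+1}(P(\cdot|_{[a,b]}))=\frac{t}{12n^2}(b-a_1)^3+\frac{t}{3}(a_1-a)^3$. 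I expect the only real obstacle to be bookkeeping the moving boundary $c=\frac{a_n+b}{2}$ consistently — making sure the centroid condition is imposed on the free points but not on $b$, and that the self-consistent value of $c$ is used both in locating $a_n$ and in applying Proposition~\ref{prop55}.
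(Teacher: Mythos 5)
Your proof is correct and takes essentially the same route as the paper's: centroid (midpoint) conditions on the free points $a_1,\dots,a_n$ (not on $b$) force an arithmetic progression whose last gap $b-a_n$ equals the common gap, so the free points are exactly the equally spaced configuration of Proposition~\ref{prop55} on $[a,\tfrac12(a_n+b)]$, and the error splits into the Proposition~\ref{prop55} term plus the direct integral over $b$'s Voronoi region. Your additional step of solving explicitly for the gap $h=\frac{2(b-a)}{2n+1}$, giving $a_j=a+\frac{(2j-1)(b-a)}{2n+1}$, makes the uniqueness of the configuration more transparent than the paper's implicit formulation in terms of $a_1$, but the argument is the same.
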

\begin{proof}
Let $\ga_{n+1}(P(\cdot|_{[a, b]})):=\set{a_1<a_2<\cdots<a_n<a_{n+1}}$ be an optimal set of $(n+1)$-means such that it always contains the endpoint $b$ of the interval $[a, b]$, i.e., $a_{n+1}=b$. Let us first prove the following claim.

\tit{Claim. $a_1, a_2, \cdots, a_n$ are uniformly distributed over the closed interval $[a, \frac 12(a_n+b)]$.}

Recall that if there is no restriction, the elements in an optimal set are the conditional expectations in their own Voronoi regions. By the statement of the proposition, in the optimal set of $(n+1)$-means there is no restriction over the elements $a_1, a_2, \cdots, a_n$. Thus,
\[a_1=E(X : X \in [a, \frac 12(a_1+a_2)])=\frac{1}{4} (2 a+a_1+a_2) \te{ implying } a_2-a_1=2(a_1-a).\]
For $2\leq i\leq n$,
\[a_i=E(X : X\in [\frac 12(a_{i-1}+a_i), \frac 12(a_i+a_{i+1})])=\frac{1}{4} (a_{i-1}+2 a_i+a_{i+1}),\]
yielding $a_i-a_{i-1}=a_{i+1}-a_i$. Hence,
\begin{equation} \label{Me420}  a_2-a_1=a_3-a_2=\cdots=b-a_n=2(a_1-a).
\end{equation}  Notice that $b-a_n=2(a_1-a)$ implies that $\frac 12(a_n+b)-a_n=a_1-a$.
Thus, we deduce that $a_1, a_2, \cdots, a_n$ are uniformly distributed over the closed interval $[a, \frac 12(a_n+b)]$, which is the claim.
Due to the claim, using Proposition~\ref{prop55}, we have
\[a_j=\set{a+\frac{(2j-1)}{2n} (\frac 12(a_n+b)-a) \te{ for } 1\leq j\leq n}\]
yielding
\begin{align} \label{eq22}  a_j=\left\{\begin{array}{cc}
a+\frac{(2j-1)}{2n} (\frac 12(a_n+b)-a) & \te{ for } 1\leq j\leq n,\\
b & \te{ if } j=n+1.
\end{array}
\right.
\end{align}
By \eqref{Me420}, we have 
$\frac 12(a_n+b)-a=b-a_1.$
Hence, by \eqref{eq22}, we have
\[a_j=\left\{\begin{array}{cc}
a+\frac{(2j-1)(b-a_1)}{2n} & \te{ for } 1\leq j\leq n,\\
b & \te{ if } j=n+1.
\end{array}
\right.
\]
To find the quantization error we proceed as follows: Since the elements $a_1, a_2, \cdots, a_n$ are uniformly distributed over the closed interval $[a, \frac 12(a_n+b)]$, by Proposition~\ref{prop55}, the quantization error contributed by the elements $a_1, a_2, \cdots, a_n$ over the closed interval $[a, \frac 12(a_n+b)]$ is given by
\begin{equation} \label{eq567} \frac{(\frac 12(a_n+b)-a)^3 t}{12 n^2}=\frac{t}{12n^2}(b-a_1)^3.\end{equation}
The quantization error contributed by $b$ in the closed interval $[\frac 12(a_n+b), b]$ is given by
\begin{equation} \label{eq568}\int_{\frac 12(a_n+b)}^b(x-b)^2 dP=t\int_{b+a-a_1}^b (x-b)^2 \, dx=\frac{t}{3}  (a_1-a)^3.\end{equation}
By \eqref{eq567} and \eqref{eq568}, we have
\[V_{n+1}(P(\cdot|_{[a, b]}))=\frac{t}{12n^2}(b-a_1)^3+\frac{t}{3}(a_1-a)^3.\]
Thus, the proof of the proposition is complete.
\end{proof}

In the following two sections we give the main results of the paper taking $p=\frac 12$ in the mixed distribution $P:=pP_1+(1-p) P_2$, i.e., in the following two sections we calculate all the optimal sets of $n$-means and the $n$th quantization errors for all $n\in \D N$ for the mixed distribution $P:=\frac 12 P_1+\frac 12 P_2$.

\section{Optimal sets of $n$-means and the $n$th quantization errors for all $1\leq n\leq 6$. } \label{sec2}

Recall that the mixed distribution $P$ is identified as a probability distribution with density function $f$. If $p=\frac 1{2}$, then $E(X)=\frac{3}{4}$ and $V(X)=\frac{7}{48}$, i.e., the optimal set of one-mean for $p=\frac 12$ is $\set{\frac{3}{4}}$ and the corresponding quantization error is $V(X)=\frac{7}{48}$. For $p=\frac 12$, the density function $f$ for $P$ represented by \eqref{eq1} reduces to
\begin{align*} \label{eq1} f(x)=\left\{\begin{array}{cc}
\frac 12 & \te{ if } x \in [0, \frac 12]\uu [1, \frac 32], \\
1  & \te{ if } x \in [\frac 12, 1],\\
0 &  \te{ otherwise}.
\end{array}
\right.
\end{align*}
Notice that the probability measure $P$ is `symmetric' about the element $\frac 34$, i.e., if two intervals of equal lengths are
equidistant from the element $\frac 34$, then they have the same $P$-measure (see Figure~\ref{Fig1}).

\begin{remark} \label{rem111}
Since the probability distribution is symmetric about the element $\frac 34$, without any loss of generality we can always assume that if $\ga_n$ is an optimal set of $n$-means, then for an odd positive integer $n$ the element $\frac 34\in \ga_n$ and all other elements in $\ga_n$ are equally distributed on both sides of $\frac 34$; on the other hand, if $n$ is an even positive integer, then all the elements in the optimal set will be equally distributed on both sides of $\frac 34$. Thus, we see that $n$ is even or odd, in any case, an optimal set $\ga_n$ of $n$-means contains equal number of elements from both sides of the element $\frac 34$ (see Figure~\ref{Fig2}).
\end{remark}

\begin{figure}
\centerline{\includegraphics[width=7 in, height=5 in]{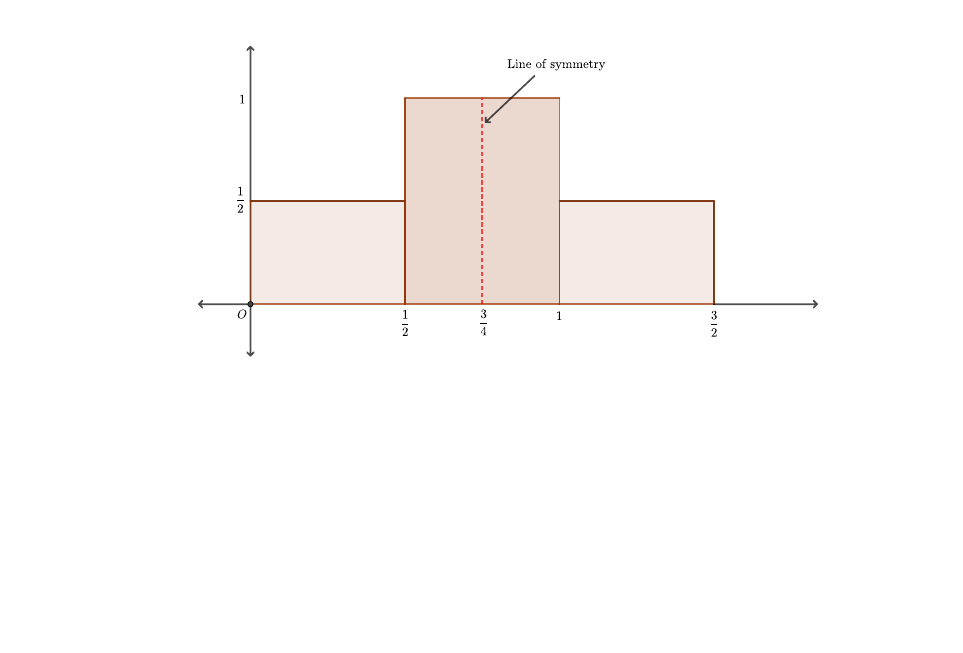}}
\vspace{-2.5 in} 
\caption{Density function $f$ for the nonuniform probability distribution $P$.}\label{Fig1}
\end{figure}

\begin{prop}   \label{prop1}
The optimal set of two-means is  $\set{\frac{7}{16}, \frac{17}{16}}$ with quantization error $V_2=\frac{37}{768}$.
\end{prop}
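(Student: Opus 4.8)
The plan is to exploit the symmetry of $P$ about $\frac 34$ established in Remark~\ref{rem111}. Since $n=2$ is even, an optimal set $\ga_2=\set{a_1<a_2}$ must contain exactly one point on each side of $\frac 34$, and by the symmetry these two points must be reflections of one another, so $a_1=\frac 34-c$ and $a_2=\frac 34+c$ for some $c>0$. The common boundary of the two Voronoi regions is then forced to be the symmetry point $\frac 34$, so that $M(a_1|\ga_2)=[0,\frac 34]$ and $M(a_2|\ga_2)=[\frac 34,\frac 32]$. This reduces the whole problem to a single computation.

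First I would compute $a_1$ as the conditional expectation $E(X : X\in[0,\frac 34])$, using Proposition~\ref{prop0}(iii). This requires integrating $x\,f(x)\,dx$ over $[0,\frac 34]$, splitting the integral at $\frac 12$ where the density $f$ jumps from $\frac 12$ to $1$, and dividing by $P([0,\frac 34])$. Since $P([0,\frac 34])=\frac 12$ by symmetry, the arithmetic is light, and I expect to land on $a_1=\frac{7}{16}$; the point $a_2=\frac 34+(\frac 34-\frac{7}{16})=\frac{17}{16}$ then follows by reflection, matching the claimed set $\set{\frac 7{16},\frac{17}{16}}$.

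Next I would compute the quantization error $V_2=\int\min_{a\in\ga_2}(x-a)^2\,dP$. By symmetry the two Voronoi regions contribute equally, so $V_2=2\int_0^{3/4}(x-\frac 7{16})^2\,dP$, and again I would split this integral at $\frac 12$ to account for the two density values, expecting the total to simplify to $\frac{37}{768}$. As a consistency check I would verify $V_2<V_1=V(X)=\frac{7}{48}=\frac{112}{768}$, which confirms that passing from one to two means strictly decreases the error, as it must.

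The one step requiring genuine justification, rather than routine integration, is the claim that an optimal two-point set is symmetric with dividing boundary exactly at $\frac 34$. The main obstacle is therefore to rule out asymmetric configurations. I would argue that if $\ga_2$ were not symmetric, then reflecting it about $\frac 34$ would produce a distinct set with the same distortion (since $P$ is symmetric), and averaging or a strict-convexity argument on the distortion functional would contradict the uniqueness-type behaviour expected of the optimum; more cleanly, one invokes Remark~\ref{rem111} directly, which asserts on symmetry grounds that the optimal set splits evenly across $\frac 34$. Once symmetry is granted, the centroid condition in Proposition~\ref{prop0}(iii) pins down the single free parameter $c$, and the rest is bookkeeping.
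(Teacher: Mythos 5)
Your proposal is correct and follows essentially the same route as the paper: symmetry of $P$ about $\frac 34$ places the Voronoi boundary at $\frac 34$, the centroid condition gives $a_1=E(X : X\in[0,\frac 34])=\frac{7}{16}$ with $a_2=\frac{17}{16}$ by reflection, and doubling the left-half distortion (split at $\frac 12$ for the density jump) yields $V_2=\frac{37}{768}$. Note that the paper, like your cleaner fallback, simply invokes the symmetry assertion of Remark~\ref{rem111} rather than attempting the strict-convexity/uniqueness argument you sketch (which would need care, since symmetric measures can in general admit asymmetric optimal sets), so your final justification is at exactly the paper's level of rigor.
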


\begin{proof}
Let $\ga:=\set{a_1, a_2}$ be an optimal set of two-means. Since the elements in an optimal set are the conditional expectations in their own Voronoi regions, we can assume that $0<a_1<a_2<\frac 32$. Again, due to symmetry of the probability distribution $P$ about the element $\frac 34$, we can assume that the boundary $\frac 12(a_1+a_2)$ of the Voronoi regions of $a_1$ and $a_2$ passes through the midpoint $\frac 34$ of the support of $P$. Thus, we have
\[a_1=E(X : X\in [0, \frac 34])=\frac{\frac{1}{2} \int_0^{\frac{1}{2}} x \, dx+\int_{\frac{1}{2}}^{\frac{3}{4}} x \, dx}{\frac{1}{2} \int_0^{\frac{1}{2}} 1 \, dx+\int_{\frac{1}{2}}^{\frac{3}{4}} 1 \, dx}=\frac{7}{16},\]
and since $\frac 12(a_1+a_2)=\frac 34$, we have $a_2=\frac{17}{16}$. Again, due to symmetry, the quantization error for two-means is given by
\[V_2=2 \Big(\frac{1}{2} \int_0^{\frac{1}{2}} (x-\frac{7}{16})^2 \, dx+\int_{\frac{1}{2}}^{\frac{3}{4}}(x-\frac{7}{16})^2 \, dx\Big)=\frac{37}{768}.\]
Thus, the proof of the proposition is complete (also see Figure~\ref{Fig2}).
\end{proof}

\begin{prop}   \label{prop2}
The optimal set of three-means is  $\set{\frac 14, \frac 34, \frac 54}$ with quantization error $V_3=\frac{1}{48}$.
\end{prop}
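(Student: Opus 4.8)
The plan is to mirror the method of Proposition~\ref{prop1}, the difference being that for three-means the location of the Voronoi boundary is no longer pinned down by symmetry alone and must be solved for. By Remark~\ref{rem111}, since $3$ is odd I may assume without loss of generality that an optimal set of three-means has the symmetric form $\ga:=\set{a_1, \frac 34, a_3}$ with $a_1<\frac 34<a_3$ and $a_1+a_3=\frac 32$. Writing $a_2=\frac 34$, the two Voronoi boundaries are $m_1=\frac 12(a_1+\frac 34)$ and $m_2=\frac 12(\frac 34+a_3)$, with $m_1+m_2=\frac 32$ by symmetry; in particular the middle Voronoi region $[m_1,m_2]$ is symmetric about $\frac 34$, so the centroid condition $a_2=E(X:X\in[m_1,m_2])=\frac 34$ from Proposition~\ref{prop0}$(iii)$ is automatic. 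Thus the only nontrivial stationarity condition is $a_1=E(X:X\in[0,m_1])$.

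Next I would solve this fixed-point equation for $a_1$, equivalently for $m_1$. The key point is that $f$ is not constant on $[0,m_1]$: it equals $\frac 12$ on $[0,\frac 12]$ and $1$ on $[\frac 12,1]$, so the form of the conditional expectation depends on whether $m_1\leq\frac 12$ or $m_1>\frac 12$, and I would treat both cases. If $m_1\leq\frac 12$, the region carries constant density $\frac 12$, so $a_1=\frac{m_1}{2}$; combined with $a_1=2m_1-\frac 34$ this forces $m_1=\frac 12$ and $a_1=\frac 14$. If $m_1>\frac 12$, splitting the integrals at $\frac 12$ gives $a_1=\big(\frac{m_1^2}{2}-\frac{1}{16}\big)/\big(m_1-\frac 14\big)$, and substituting $a_1=2m_1-\frac 34$ yields the quadratic $6m_1^2-5m_1+1=0$, whose roots are $m_1=\frac 12$ and $m_1=\frac 13$; the root $\frac 13$ violates the assumption $m_1>\frac 12$ and is discarded. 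Hence in every case the unique admissible solution is $m_1=\frac 12$, $a_1=\frac 14$, and by symmetry $a_3=\frac 54$, so $\ga=\set{\frac 14,\frac 34,\frac 54}$. This settles optimality and not merely stationarity, because Remark~\ref{rem111} already restricts the optimal set to the symmetric family above and Proposition~\ref{prop0} forces the centroid condition; since that condition has a unique admissible solution, the optimal set must be exactly this one.

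Finally I would compute the quantization error, exploiting the fact that at $m_1=\frac 12$ each Voronoi region is an interval on which $f$ is constant: $a_1=\frac 14$ governs $[0,\frac 12]$ with density $\frac 12$, the middle point $\frac 34$ governs $[\frac 12,1]$ with density $1$, and $a_3=\frac 54$ governs $[1,\frac 32]$ with density $\frac 12$. Applying Proposition~\ref{prop55} with $n=1$ to each piece, the middle interval contributes $\frac{(1/2)^3\cdot 1}{12}=\frac{1}{96}$ and each outer interval contributes $\frac{(1/2)^3\cdot\frac 12}{12}=\frac{1}{192}$, so $V_3=\frac{1}{96}+2\cdot\frac{1}{192}=\frac{1}{48}$.

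I expect the main obstacle to be the case analysis around the density discontinuity at $\frac 12$: one must verify that the stationary boundary lands exactly on the jump of $f$ (so that the clean application of Proposition~\ref{prop55} is justified) and confirm that the spurious root $m_1=\frac 13$ is inadmissible, ruling out any competing symmetric configuration.
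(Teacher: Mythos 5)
Your proof is correct, but it follows a genuinely different route from the paper's. The paper fixes the symmetric form $\set{a_1,\frac 34, \frac 32 - a_1}$ and then writes the distortion error explicitly as a cubic polynomial in $a_1$, in two cases according to whether the Voronoi boundary $\frac 12(a_1+\frac 34)$ lies left or right of the density jump at $\frac 12$, and minimizes each cubic to find $a_1=\frac 14$; optimality is obtained by direct comparison of distortion values. You instead never form the distortion function: you impose the centroid condition of Proposition~\ref{prop0}$(iii)$, reduce it to a fixed-point equation for the boundary $m_1$, and show (handling the same two cases around the jump at $\frac 12$) that $m_1=\frac 12$, $a_1=\frac 14$ is the \emph{unique} admissible solution; optimality then follows from existence of an optimal set plus the symmetry reduction of Remark~\ref{rem111}, since the optimal set must be a stationary configuration and there is only one. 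Your route buys cleaner, exact algebra (a quadratic $6m_1^2-5m_1+1=0$ rather than minimizing cubics with decimal minima) and a tidy error computation via Proposition~\ref{prop55} once one notices the boundary lands exactly on the discontinuity of $f$; its cost is that it leans on the uniqueness of the stationary point --- had the fixed-point equation admitted several admissible roots, you would have had to compare their distortion errors, which is essentially what the paper does from the start. Both arguments share the same two-case decomposition at $x=\frac 12$ and the same use of symmetry, so the logical skeletons are parallel even though the mechanism (stationarity and uniqueness versus direct minimization) differs.
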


\begin{proof}
Let $\ga$ be an optimal set of three-means.
As mentioned in Remark~\ref{rem111}, we can assume that $\frac 34\in \ga$. Let the other two elements in $\ga$ be $a_1$ and $a_2$ such that $0<a_1<\frac 34<a_2<\frac 32$. Now, the boundary of the Voronoi regions of $a_1$ and $\frac 34$ is $\frac 12(a_1+\frac 34)$. The following two cases can arise:

\tit{Case~1. $\frac 12(a_1+\frac 34)\leq \frac 12.$}

In this case, due to symmetry the distortion error is given by
\begin{align*}
&\int \min_{a\in \ga}(x-a)^2 dP= 2 \Big(\frac{1}{2} \int_0^{\frac{1}{2} (a_1+\frac{3}{4})} (x-a_1){}^2 \, dx+\frac{1}{2} \int_{\frac{1}{2} (a_1+\frac{3}{4})}^{\frac{1}{2}} (x-\frac{3}{4})^2 \, dx+\int_{\frac{1}{2}}^{\frac{3}{4}} (x-\frac{3}{4})^2 \, dx\Big)\\
&=\frac{1}{768} \Big(192 a_1^3+144 a_1^2-108 a_1+31\Big),
\end{align*}
the minimum value of which is $0.0208333$ and it occurs when $a_1=\frac 14$.

\tit{Case~2. $\frac 12\leq \frac 12(a_1+\frac 34).$}

In this case, due to symmetry the distortion error is given by
\begin{align*}
&\int \min_{a\in \ga}(x-a)^2 dP= 2 \Big(\frac{1}{2} \int_0^{\frac{1}{2}} (x-a_1 ){}^2 \, dx+\int_{\frac{1}{2}}^{\frac{1}{2}  (a_1+\frac{3}{4} )}  (x-a_1 ){}^2 \, dx+\int_{\frac{1}{2}  (a_1+\frac{3}{4} )}^{\frac{3}{4}}  (x-\frac{3}{4} )^2 \, dx\Big)\\
&=\frac{1}{384} \Big(192 a_1^3-48 a_1^2-12 a_1+11\Big),
\end{align*}
the minimum value of which is $0.0208333$ and it occurs when $a_1=\frac 14$.

Thus, considering all the possible cases we see that the distortion error is smallest when $a_1=\frac 14$, and since $\frac 12(a_1+a_2)=\frac 34$, we have $a_2=\frac 54$. Thus, the optimal set of three-means is  $\set{\frac 14, \frac 34, \frac 54}$ with quantization error $V_3=\frac{1}{48}$ (also see Figure~\ref{Fig2}).
\end{proof}

\begin{prop}   \label{prop3}
The optimal set of four-means is  $\set{0.198223, 0.59467, 0.90533, 1.30178}$ with quantization error $V_4=0.01057$.
\end{prop}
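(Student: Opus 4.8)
The plan is to exploit the symmetry of $P$ about $\frac 34$ exactly as in Propositions~\ref{prop1} and~\ref{prop2}. Since $n=4$ is even, Remark~\ref{rem111} lets me assume that an optimal set $\ga:=\set{a_1<a_2<a_3<a_4}$ has two quantizers on each side of $\frac 34$, with $a_3=\frac 32-a_2$ and $a_4=\frac 32-a_1$, and that the Voronoi boundary separating the left pair from the right pair is the point of symmetry $\frac 34$. This reduces the problem to locating the two left quantizers $a_1<a_2$ in $[0,\frac 34]$; writing $m:=\frac 12(a_1+a_2)$ for the interior boundary, the total distortion is $V(P;\ga)=2\big(\int_0^m (x-a_1)^2 f(x)\,dx+\int_m^{3/4}(x-a_2)^2 f(x)\,dx\big)$.

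Because $f$ is piecewise constant with a jump at $\frac 12$, I would split into two cases according to the position of $m$ relative to $\frac 12$, mirroring the two cases in Proposition~\ref{prop2}. In the case $m\le\frac 12$, the Voronoi region $[0,m]$ of $a_1$ lies where $f\equiv\frac 12$, so the stationarity condition (Proposition~\ref{prop0}$(iii)$) forces $a_1=\frac m2$, i.e. $a_2=3a_1$ and hence $m=2a_1$; the condition $a_2=E(X:X\in[m,\frac 34])$ then becomes, after evaluating the elementary integrals across the jump at $\frac 12$, a rational equation in $a_1$ that clears to the quadratic $64a_1^2-48a_1+7=0$. Its admissible root is $a_1=\frac{3-\sqrt 2}{8}\approx 0.198223$, giving $a_2=3a_1\approx 0.59467$ and, by reflection, $a_3\approx 0.90533$, $a_4\approx 1.30178$; one checks that the resulting $m=2a_1<\frac 12$ is consistent with the case hypothesis. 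Substituting these values into the distortion integral produces $V_4=0.01057$.

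In the complementary case $m\ge\frac 12$ the roles reverse: $[m,\frac 34]\ci[\frac 12,\frac 34]$ carries constant density, so $a_2=\frac 12(m+\frac 34)$, while $a_1$ is the centroid of $[0,m]$ across the jump. Carrying out the same elimination yields a second critical configuration whose distortion I would compute and compare with $0.01057$. I expect this case either to be inconsistent (its critical $m$ violating $m\ge\frac 12$) or to give a strictly larger error, so that the case $m\le\frac 12$ wins.

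The main obstacle is the bookkeeping of the case analysis together with the verification that the located critical point is a genuine global minimizer rather than merely a stationary point: I must confirm that each case's candidate is consistent with its defining inequality on $m$, that the stationary point is indeed a minimum (for instance by checking the sign of the second derivative of the piecewise-cubic distortion, which is smooth within each case), and that the symmetric reduction supplied by Remark~\ref{rem111} has not discarded a better asymmetric configuration. Once these consistency checks are in place, comparing the two case-minima is routine and selects the asserted optimal set.
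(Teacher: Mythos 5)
Your proposal is correct, and it reaches the result by a genuinely different route than the paper. The paper partitions into three cases according to the positions of $a_1,a_2$ relative to $\frac12$ (both in $[0,\frac12]$; straddling $\frac12$, with two subcases on $m=\frac12(a_1+a_2)$; both in $[\frac12,\frac34]$), writes the distortion in each case as an explicit cubic polynomial in $(a_1,a_2)$, and reports the numerical minimum of each polynomial, finally comparing the four numbers. You instead partition only on the location of $m$ relative to $\frac12$ --- a coarser but still exhaustive dichotomy (your first case absorbs the paper's Case~1 and Case~2/Subcase~1, your second absorbs Case~2/Subcase~2 and Case~3) --- and then use the centroid conditions of Proposition~\ref{prop0}$(iii)$ to eliminate variables exactly. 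Your algebra checks out: in the case $m\le\frac12$ one gets $a_1=\frac m2$, $a_2=3a_1$, and the centroid condition $a_2=E(X:X\in[m,\frac34])=\frac{7-8m^2}{16(1-m)}$ clears to $64a_1^2-48a_1+7=0$, whose only root consistent with $m=2a_1\le\frac12$ is $a_1=\frac{3-\sqrt2}{8}\approx 0.198223$, giving $a_2=\frac{9-3\sqrt2}{8}\approx 0.59467$ and, by reflection, $a_3=\frac{3+3\sqrt2}{8}$, $a_4=\frac{9+\sqrt2}{8}$, with distortion $\approx 0.01057$. Your deferred second case can be closed the same way: there $a_2=\frac12(m+\frac34)$ and $a_1=\frac{8m^2-1}{16m-4}$, and eliminating gives $32m^2-24m+5=0$, which has negative discriminant, so that case contains no stationary configuration at all (its constrained minimum sits on the boundary $m=\frac12$, matching the paper's reported Subcase~2 value $0.0169271$ at $a_1=0.3125$, $a_2=0.6875$, which is larger). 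What your approach buys is exactness --- it exposes the paper's decimals as approximations of $\frac{3\pm\sqrt2}{8}$-type closed forms and replaces numerical minimization by solving a quadratic; what it costs is that you must invoke existence of an optimal set together with the necessity of the centroid conditions, so that comparing the finitely many critical configurations (here, just one) suffices; with that invocation, your second-derivative worry is unnecessary. The symmetry reduction you lean on is supplied by Remark~\ref{rem111} exactly as in the paper, so no asymmetric configuration is lost in either argument.
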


\begin{proof}
Let $\ga:=\set{a_1, a_2, a_3, a_4}$ be an optimal set of four-means. Due to symmetry of the probability measure we can say that the elements in the optimal set will be symmetrically located on the line with respect to the element $\frac 34$, i.e.,  $0<a_1<a_2<\frac 34<a_3<a_4<\frac 32$, and $\frac 34$ is the midpoint of $a_2$ and $a_3$. The following cases can arise:

\tit{Case~1. $0<a_1<a_2\leq \frac 12$.}

In this case, due to symmetry the distortion error is given by
\begin{align*}
&\int \min_{a\in \ga}(x-a)^2 dP= 2 \Big(\frac{1}{2} \int_0^{\frac{1}{2} (a_1+a_2 )} (x-a_1 ){}^2 \, dx+\int_{\frac{1}{2}}^{\frac{3}{4}} (x-a_2 ){}^2 \, dx+\frac{1}{2} \int_{\frac{1}{2} (a_1+a_2 )}^{\frac{1}{2}} (x-a_2 ){}^2 \, dx\Big)\\
&=\frac{1}{96} \Big(24 a_1^3+24 a_2 a_1^2-24 a_2^2 a_1-24 a_2^3+96 a_2^2-84 a_2+23\Big),
\end{align*}
the minimum value of which is $0.0150463$ and it occurs when $a_1=0.166667$ and $a_2=0.5$.

\tit{Case~2. $0<a_1\leq \frac 12<a_2<\frac 34$.}

In this case, the following two subcases can occur.

\tit{Subcase~1. $\frac 12(a_1+a_2)\leq \frac 12.$}

In this subcase, due to symmetry the distortion error is given by
\begin{align*}
&\int \min_{a\in \ga}(x-a)^2 dP= 2 \Big(\frac{1}{2} \int_0^{\frac{1}{2}  (a_1+a_2 )}  (x-a_1 ){}^2 \, dx+\int_{\frac{1}{2}}^{\frac{3}{4}}  (x-a_2 ){}^2 \, dx+\frac{1}{2} \int_{\frac{1}{2}  (a_1+a_2 )}^{\frac{1}{2}}  (x-a_2 ){}^2 \, dx\Big)\\
&=\frac{1}{96} \Big(24 a_1^3+24 a_2 a_1^2-24 a_2^2 a_1-24 a_2^3+96 a_2^2-84 a_2+23\Big),
\end{align*}
the minimum value of which is $0.01057$ and it occurs when $a_1=0.198223$ and $a_2=0.59467$.

\tit{Subcase~2.  $\frac 12\leq \frac 12(a_1+a_2).$}

In this case, due to symmetry the distortion error is given by
\begin{align*}
&\int \min_{a\in \ga}(x-a)^2 dP= 2 \Big(\frac{1}{2} \int_0^{\frac{1}{2}}  (x-a_1 ){}^2 \, dx+\int_{\frac{1}{2}}^{\frac{1}{2}  (a_1+a_2 )}  (x-a_1 ){}^2 \, dx+\int_{\frac{1}{2}  (a_1+a_2 )}^{\frac{3}{4}}  (x-a_2 ){}^2 \, dx\Big)\\
&=\frac{1}{96}  (48 a_1^3+48  (a_2-1 ) a_1^2+ (24-48 a_2^2 ) a_1-48 a_2^3+144 a_2^2-108 a_2+23 ),
\end{align*}
the minimum value of which is $0.0169271$ and it occurs when $a_1=0.3125$ and $a_2=0.6875$.

\tit{Case~3. $\frac 12\leq a_1<a_2<\frac 34$.}

Notice that in this case we obtain
\begin{align*}
&\int \min_{a\in \ga}(x-a)^2 dP\geq \frac 2 2\int_0^{\frac{1}{2}} (x-\frac{1}{2})^2 \, dx=\frac{1}{24}=0.0416667,
\end{align*}
which is larger than the distortion errors obtained in at least one of the previous cases. So, this case cannot happen.

Thus, considering all the possible cases, we can deduce that the smallest distortion error is $V_4=0.01057$, and it occurs when $a_1=0.198223$ and $a_2=0.59467$. Since $\frac 12(a_2+a_3)=\frac 34$ and $\frac 12(a_1+a_4)=\frac 34$, we have $a_3=0.90533$ and $a_4=1.30178$. Thus, the optimal set of four-means is  $\set{0.198223, 0.59467, 0.90533, 1.30178}$ with quantization error $V_4=0.01057$, which is the proposition (also see Figure~\ref{Fig2}).
\end{proof}

\begin{prop}   \label{prop4}
The optimal set of five-means is  $\set{0.169821, 0.509464, \frac 34, 0.990536, 1.33018}$ with quantization error $V_5=0.00721728$.
\end{prop}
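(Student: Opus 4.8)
The plan is to mirror the strategy of Proposition~\ref{prop3}, exploiting the symmetry of $P$ about $\frac 34$. Since $5$ is odd, Remark~\ref{rem111} lets me assume $\frac 34\in\ga$ and that the remaining four points split symmetrically about $\frac 34$; writing the left pair as $a_1<a_2$ with $0<a_1<a_2<\frac 34$, the right pair is forced to be $\frac 32-a_2$ and $\frac 32-a_1$. Hence the whole problem reduces to locating the two numbers $a_1,a_2$, and the target values $0.169821,\,0.509464$ should emerge as their optimal choice, after which $a_3=\frac 32-a_2=0.990536$ and $a_4=\frac 32-a_1=1.33018$ follow immediately, as does $V_5=0.00721728$.

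First I would record the Voronoi boundaries dictated by Proposition~\ref{prop0}: the boundary between $a_1$ and $a_2$ sits at $\frac 12(a_1+a_2)$, the boundary between $a_2$ and $\frac 34$ at $\frac 12(a_2+\frac 34)$, and by symmetry the Voronoi region of $\frac 34$ is the interval centered at $\frac 34$ with left endpoint $\frac 12(a_2+\frac 34)$, on which $f\equiv 1$. Collecting the left half and doubling, the distortion becomes
\[2\Big(\te{contribution of }a_1,a_2\te{ on }[0,\tfrac 12(a_2+\tfrac 34)]+\int_{\frac 12(a_2+\frac 34)}^{\frac 34}(x-\tfrac 34)^2 f(x)\,dx\Big),\]
where the piecewise constant $f$ (namely $f=\frac 12$ on $[0,\frac 12]$ and $f=1$ on $[\frac 12,1]$) forces a case split according to where $a_1,a_2$ and the boundary $\frac 12(a_1+a_2)$ fall relative to the density break at $\frac 12$.

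I would then carry out the same case analysis as in Proposition~\ref{prop3}: (i) $a_2\le\frac 12$; (ii) $a_1\le\frac 12<a_2<\frac 34$, further split into the subcases $\frac 12(a_1+a_2)\le\frac 12$ and $\frac 12\le\frac 12(a_1+a_2)$; and (iii) $\frac 12\le a_1<a_2<\frac 34$, which I expect to discard by the crude lower bound coming from an estimate of the form $\int_0^{1/2}(x-\frac 12)^2 f\,dx$, exactly as in Case~3 there. In each surviving case the integrals evaluate to an explicit cubic polynomial in $(a_1,a_2)$, which I minimize by setting the two partial derivatives to zero. The winning configuration should be the subcase $\frac 12(a_1+a_2)\le\frac 12<a_2$, in which $a_1$'s cell lies wholly in the low-density band, $a_2$'s cell straddles $\frac 12$, and the stationary pair solves the resulting system to give $(a_1,a_2)=(0.169821,0.509464)$ with distortion $0.00721728$.

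The main obstacle is that, unlike the three-means case, the stationarity equations in the decisive subcase are genuine cubics with no clean closed-form roots, so the optimal $a_1,a_2$ are only numerical; consequently the real work is not a single minimization but the comparison across all cases and subcases, verifying that every competing local minimum, and every boundary configuration where a point crosses $\frac 12$ or a Voronoi boundary crosses the density break, yields a strictly larger value than $0.00721728$. Once that comparison is complete, symmetry supplies $a_3,a_4$ and the stated value of $V_5$, completing the proof.
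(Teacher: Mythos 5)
Your proposal is correct and follows essentially the same route as the paper: the same symmetry reduction to the two left points $a_1<a_2$, the same case split relative to the density break at $\frac 12$ (with the winning configuration $a_1\le\frac 12<a_2$, $\frac 12(a_1+a_2)\le\frac 12$), the same numerical minimization of the resulting cubic distortion polynomials, and the same reflection step giving $a_4=\frac 32-a_2$ and $a_5=\frac 32-a_1$. The only cosmetic difference is that the paper also explicitly splits Case~1 ($a_2\le\frac 12$) into two subcases according to whether $\frac 12(a_2+\frac 34)$ lies left or right of $\frac 12$, which your remark about boundary configurations crossing the density break already covers.
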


\begin{proof}
Let $\ga:=\set{a_1<a_2<a_3<a_4<a_5}$ be an optimal set of five-means.
As mentioned in Remark~\ref{rem111}, we can assume that $a_3=\frac 34$. The following cases can happen.

\tit{Case~1. $a_2\leq \frac 12$}

In this case the following subcases can happen. 

\tit{Subcase~1. $\frac 12(a_2+\frac 34)\leq \frac 12$.}

Due to symmetry the distortion error is given by
\begin{align*}
\int \min_{a\in \ga}(x-a)^2 dP&=  2 \Big(\frac{1}{2} \int_0^{\frac{1}{2} (a_1+a_2 )}  (x-a_1 ){}^2 \, dx+\frac{1}{2} \int_{\frac{1}{2}  (a_2+\frac{3}{4} )}^{\frac{1}{2}}  (x-\frac{3}{4} )^2 \, dx+\frac{1}{2} \int_{\frac{1}{2}  (a_1+a_2 )}^{\frac{1}{2}  (a_2+\frac{3}{4} )}  (x-a_2 ){}^2 \, dx\\
& \qquad \qquad \qquad  +\int_{\frac{1}{2}}^{\frac{3}{4}}  (x-\frac{3}{4} )^2 \, dx\Big)\\
&=\frac{1}{768} \Big(192 a_1^3+192 a_2 a_1^2-192 a_2^2 a_1+144 a_2^2-108 a_2+31\Big),
\end{align*}
the minimum value of which is $0.0162037$, and it occurs when $a_1=0.0833333$ and $a_2 =0.25$.

\tit{Subcase~2. $\frac 12 \leq \frac 12(a_2+\frac 34)$.}

Due to symmetry the distortion error is given by
\begin{align*}
\int \min_{a\in \ga}(x-a)^2 dP&=  2 \Big(\frac{1}{2} \int_0^{\frac{1}{2}  (a_1+a_2 )}  (x-a_1 ){}^2 \, dx+\int_{\frac{1}{2}  (a_2+\frac{3}{4} )}^{\frac{3}{4}}  (x-\frac{3}{4} )^2 \, dx+\int_{\frac{1}{2}}^{\frac{1}{2}  (a_2+\frac{3}{4} )}  (x-a_2 ){}^2 \, dx\\
&\qquad \qquad \qquad +\frac{1}{2} \int_{\frac{1}{2}  (a_1+a_2 )}^{\frac{1}{2}}  (x-a_2 ){}^2 \, dx\Big)\\
&=\frac{1}{384} \Big(96 a_1^3+96 a_2 a_1^2-96 a_2^2 a_1+96 a_2^3-48 a_2^2-12 a_2+11\Big),
\end{align*}
the minimum value of which is $0.0162037$, and it occurs when $a_1=0.0833333$ and $a_2 =0.25$.

\tit{Case~2. $0<a_1\leq \frac 12<a_2<\frac 34$.}

In this case the following subcases can happen.

\tit{Subcase~1. $\frac 12(a_1+a_2)\leq \frac 12$.}

Due to symmetry the distortion error is given by
\begin{align*}
\int \min_{a\in \ga}(x-a)^2 dP&=2 \Big(\int_{\frac{1}{2}  (a_2+\frac{3}{4} )}^{\frac{3}{4}}  (x-\frac{3}{4} )^2 \, dx+\frac{1}{2} \int_0^{\frac{1}{2} (a_1+a_2 )} (x-a_1 ){}^2 \, dx+\int_{\frac{1}{2}}^{\frac{1}{2}  (a_2+\frac{3}{4} )}  (x-a_2 ){}^2 \, dx\\
&\qquad \qquad \qquad +\frac{1}{2} \int_{\frac{1}{2}  (a_1+a_2 )}^{\frac{1}{2}} (x-a_2 ){}^2 \, dx\Big)\\
&=\frac{1}{384} \Big(96 a_1^3+96 a_2 a_1^2-96 a_2^2 a_1+96 a_2^3-48 a_2^2-12 a_2+11\Big),
\end{align*}
the minimum value of which is $0.00721728$, and it occurs when $a_1=0.169821$ and $a_2 =0.509464$.

\tit{Subcase~2. $\frac 12 \leq \frac 12(a_1+a_2)$.}

Due to symmetry the distortion error is given by
\begin{align*}
\int \min_{a\in \ga}(x-a)^2 dP&=  2 \Big(\int_{\frac{1}{2}  (a_2+\frac{3}{4} )}^{\frac{3}{4}}  (x-\frac{3}{4} )^2 \, dx+\frac{1}{2} \int_0^{\frac{1}{2}}  (x-a_1 ){}^2 \, dx+\int_{\frac{1}{2}}^{\frac{1}{2}  (a_1+a_2 )}  (x-a_1 ){}^2 \, dx\\
&\qquad \qquad \qquad +\int_{\frac{1}{2}  (a_1+a_2 )}^{\frac{1}{2}  (a_2+\frac{3}{4} )}  (x-a_2 ){}^2 \, dx\Big)\\
&=\frac{1}{384} \Big(192 a_1^3+192  (a_2-1 ) a_1^2-96  (2 a_2^2-1 ) a_1+144 a_2^2-108 a_2+11\Big),
\end{align*}
the minimum value of which is $0.0167955$, and it occurs when $a_1=0.315741$ and $a_2 =0.684259$.

\tit{Case~3. $\frac 12<a_1<a_2<\frac 34$.}

Due to symmetry the distortion error is given by
\begin{align*}
\int \min_{a\in \ga}(x-a)^2 dP>2\times \frac 12 \int_0^{\frac{1}{2}} (x-\frac{1}{2})^2 \, dx=0.0416667,
\end{align*}
which is larger than the distortion error that arises in at least one of the previous cases.

Taking into consideration all the above possible cases, we see that the quantization error for optimal set of five-means is $V_5=0.00721728$, and it occurs when $a_1=0.169821$ and $a_2 =0.509464$. Due to symmetry, we have $a_4=0.990536$ and $a_5=1.33018$. Thus, the proof of the proposition is complete (also see Figure~\ref{Fig2}).
\end{proof}

Let us now prove the following lemma.

\begin{lemma} \label{lemma3}
Let $\ga_n$ be an optimal set of $n$-means for $n\geq 4$. Then, $\ga_n$ contains elements from both the open intervals $(0, \frac 12)$ and $(\frac 12, \frac 34)$.
\end{lemma}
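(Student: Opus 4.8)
The plan is to exploit the symmetry of $P$ about $\frac34$ (Remark~\ref{rem111}), the monotonicity $V_n\leq V_{n-1}$ of the quantization errors, and the explicit small cases already in hand. Since an optimal set $\ga_n$ is symmetric about $\frac34$, it suffices to show that $\ga_n$ meets $(0,\frac12)$ and $(\frac12,\frac34)$; membership in $(\frac34,1)$ and $(1,\frac32)$ then follows by reflection. The cases $n=4$ and $n=5$ are already settled by Propositions~\ref{prop3} and~\ref{prop4}, whose displayed optimal sets visibly contain points in both intervals, so I would assume $n\geq 6$ and use $V_n\leq V_5=0.00721728$. Recall also that each point of $\ga_n$ is the conditional expectation of its Voronoi region (Proposition~\ref{prop0}), so the smallest point of $\ga_n$ is strictly positive.

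First I would prove $\ga_n\cap(0,\frac12)\neq\es$. If not, every point of $\ga_n$ is $\geq\frac12$, so on $[0,\frac12]$ the integrand $\min_{a\in\ga_n}(x-a)^2$ is at least $(x-\frac12)^2$, and hence $V_n\geq\frac12\int_0^{1/2}(x-\frac12)^2\,dx=\frac1{48}$. Since $\frac1{48}>V_4\geq V_n$, this is a contradiction; this step in fact works verbatim for all $n\geq 4$. Next, suppose $\ga_n\cap(\frac12,\frac34)=\es$; by symmetry $\ga_n\cap(\frac34,1)=\es$ as well, so the open interval $(\frac12,1)$, on which $P$ has the large density $1$, contains at most one point of $\ga_n$. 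If it contains none, then $[\frac12,1]$ is served only by points $\leq\frac12$ and $\geq 1$ with common boundary $\frac34$, giving $V_n\geq\int_{1/2}^{3/4}(x-\frac12)^2\,dx+\int_{3/4}^{1}(x-1)^2\,dx=\frac1{96}>V_5\geq V_n$, again a contradiction.

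The remaining and genuinely harder case is when $(\frac12,1)$ contains exactly one point, which by symmetry must be $\frac34$ itself, so $n$ is odd. Here a single point sits at the centre of the high-density block $[\frac12,1]$, and if its nearest neighbours lie close to $\frac12$ and $1$ the contribution of $[\frac12,1]$ can drop to as little as $\frac1{384}<V_5$; consequently no bound coming from a single fixed sub-interval can finish the argument. To deal with it I would argue globally. Let $x\leq\frac12$ be the largest point of $\ga_n$ below $\frac34$, let $b=\frac12(x+\frac34)\leq\frac58$ be the left boundary of the Voronoi cell of $\frac34$, and let $m=\frac{n-1}{2}$ be the number of points of $\ga_n$ in $(0,\frac12]$. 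Optimality forces $\{x_1,\dots,x_m\}$ to be an optimal $m$-means set for the conditional measure $P(\cdot|_{[0,b]})$, which still carries density $\frac12$ on $[0,\frac12]$ and density $1$ on $[\frac12,b]$. The essential tension is that making the central error small requires $x$ (hence $b$) close to $\frac12$, which pushes the density-$1$ mass of $P(\cdot|_{[0,b]})$ toward the right end of $[0,b]$ and thereby forces the $m$ left points to cluster there, inflating the error over $[0,\frac12]$.

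I would make this quantitative by combining the central contribution with a lower bound for the left block and minimizing the sum over the location of $x$, with the goal of showing the total still exceeds $V_5\geq V_n$. Proposition~\ref{prop56}, which describes the optimal configuration when an endpoint of an interval is forced to be a quantizer, is the natural tool for estimating the left block once $b$ is fixed, and the explicit quantization error of a uniform piece from Proposition~\ref{prop55} controls the central cell. The main obstacle is exactly this coupled estimate: the clean region-by-region bounds that dispatch every other case are individually too weak here, and the real work is in controlling the trade-off between the central cell of $\frac34$ and its flanking cells, rather than any single interval, so as to force the total distortion above $V_n$.
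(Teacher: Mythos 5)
Your first two cases are correct and coincide with the paper's own argument: if $\ga_n\ii(0,\frac 12)=\es$, the contribution of $[0,\frac 12]$ alone is at least $\frac 12\int_0^{1/2}(x-\frac 12)^2\,dx=\frac 1{48}>V_4\geq V_n$, and if $\ga_n\ii(\frac 12,1)=\es$, the contribution of $[\frac 12,1]$ is at least $\frac 1{96}>V_5\geq V_n$ (the paper uses the two-sided variants $\frac 1{24}$ and $\frac 1{96}$; both versions suffice). The problem is your third case: for odd $n$ with $\ga_n\ii(\frac 12,1)=\set{\frac 34}$ you set up notation, name Propositions~\ref{prop55} and~\ref{prop56} as tools, and then declare the ``coupled estimate'' between the central cell and the left block to be the main obstacle without carrying it out. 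A plan plus an acknowledged obstacle is not a proof, so as written your argument establishes the lemma only for $n=4,5$ and for even $n$; odd $n\geq 7$ remains a genuine gap in your proposal.

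That said, the case you could not close is precisely the case the paper's proof silently skips: the paper's displayed bound $V_n\geq 2\int_{1/2}^{3/4}(x-\frac 12)^2\,dx=\frac 1{96}$ presumes that every quantizer is at distance at least $x-\frac 12$ from each $x\in(\frac 12,\frac 34)$, and this fails exactly when $\frac 34\in\ga_n$, which is the configuration Remark~\ref{rem111} stipulates for odd $n$; your computation that the contribution of $[\frac 12,1]$ can then drop to $\frac 1{384}<V_5$ is correct. However, your further claim that ``no bound coming from a single fixed sub-interval can finish the argument'' is too pessimistic, and this suggests a repair simpler than your coupled estimate: in that configuration the Voronoi region of $\frac 34$ contains $[\frac 58,\frac 78]$ (its neighbors lie in $[0,\frac 12]$ and, by symmetry, in $[1,\frac 32]$), so $V_n\geq\int_{5/8}^{7/8}(x-\frac 34)^2\,dx=\frac 1{768}$; on the other hand, placing $n$ equally spaced points on $[0,\frac 32]$ and using $f\leq 1$ gives $V_n\leq\frac{(3/2)^3}{12n^2}=\frac 9{32n^2}$, which is below $\frac 1{768}$ for all $n\geq 15$. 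This single-interval bound therefore finishes every odd $n\geq 15$, leaving only $n=7,9,11,13$ to be settled by direct computation in the style of Propositions~\ref{prop2}--\ref{prop6}, or by completing your coupled estimate. Some such additional argument is needed to make the lemma's proof complete --- both in your proposal and, arguably, in the paper itself.
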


\begin{proof}
By Propositions~\ref{prop3} and Proposition~\ref{prop4}, the lemma is true for $n=4$ and $n=5$. Let us now prove the lemma for $n\geq 6$. We prove it by contradiction. Recall Remark~\ref{rem111}, and also recall that for $n\geq 6$, we have $V_n\leq V_6<V_5$. For $n\geq 6$, if $\ga_n$ does not contain any element from the open interval $(\frac 12, \frac 34)$, then due to symmetry we have
\[V_n\geq 2 \int_{\frac{1}{2}}^{\frac{3}{4}} (x-\frac{1}{2})^2 \, dx=\frac{1}{96}=0.0104167>V_5,\]
which leads to a contradiction. For $n\geq 6$, if $\ga_n$ does not contain any element from the open interval $(0, \frac 12)$, then due to symmetry we have
\[V_n\geq \frac{2}{2} \int_0^{\frac{1}{2}} (x-\frac{1}{2})^2 \, dx =\frac{1}{24}=0.0416667>V_5,\]
which is a contradiction.
Hence, we can conclude that the lemma is also true for $n\geq 6$. Thus, the proof of the lemma is complete.
\end{proof}

\begin{prop}   \label{prop6}
The optimal set of six-means is  $\set{0.125, 0.375, 0.625, 0.875, 1.125, 1.375}$ with quantization error $V_6=0.00520833$.
\end{prop}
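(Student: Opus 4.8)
The plan is to follow the same strategy used in Propositions~\ref{prop3} and \ref{prop4}: exploit the symmetry of $P$ about $\frac 34$ to halve the number of free parameters, invoke Lemma~\ref{lemma3} to pin down the quantizers relative to the density breakpoint at $\frac 12$, and then reduce the optimization to minimizing an explicit cubic polynomial in each of finitely many cases. First I would apply Remark~\ref{rem111}: since $n=6$ is even, an optimal set $\ga_6$ carries three points on each side of $\frac 34$, mutually symmetric, with the central Voronoi boundary passing through $\frac 34$. Writing the three left-hand points as $a_1<a_2<a_3<\frac 34$, the right-hand points are $\frac 32-a_3<\frac 32-a_2<\frac 32-a_1$, and the total distortion equals twice the distortion contributed on $[0,\frac 34]$, where $P$ has density $\frac 12$ on $[0,\frac 12]$ and $1$ on $[\frac 12,\frac 34]$. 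Thus the problem reduces to minimizing this half-distortion over $a_1<a_2<a_3$ in $[0,\frac 34]$. By Lemma~\ref{lemma3} the set $\ga_6$ meets both $(0,\frac 12)$ and $(\frac 12,\frac 34)$; since all right-hand points lie in $[\frac 34,\frac 32]$, this forces $a_1\in(0,\frac 12)$ and $a_3\in(\frac 12,\frac 34)$, so that $a_1<\frac 12<a_3$ (and configurations violating this are excluded by exactly the crude bounds $V_6\ge\frac 1{96}$ and $V_6\ge\frac 1{24}$ used in Lemma~\ref{lemma3}).

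Next I would split into cases according to the position of $a_2$ relative to $\frac 12$ and, within each, according to where the breakpoint $\frac 12$ falls among the three Voronoi regions $[0,\frac 12(a_1+a_2)]$, $[\frac 12(a_1+a_2),\frac 12(a_2+a_3)]$, $[\frac 12(a_2+a_3),\frac 34]$; that is, I would sub-divide on whether $\frac 12(a_1+a_2)$ and $\frac 12(a_2+a_3)$ lie above or below $\frac 12$. In each sub-case the half-distortion is an explicit cubic in $a_1,a_2,a_3$ obtained by integrating $(x-a_i)^2$ against the piecewise-constant density, exactly as in the displays of Propositions~\ref{prop3} and \ref{prop4}. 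Imposing the conditional-mean condition of Proposition~\ref{prop0}$(iii)$ (equivalently, setting the partial derivatives to zero) produces the candidate critical points, which I then evaluate and compare. I expect the configuration $a_1=\frac 18$, $a_2=\frac 38$, $a_3=\frac 58$ — for which $\frac 12(a_2+a_3)=\frac 12$ exactly, so that $a_1,a_2$ are the optimal two-means of the uniform density on $[0,\frac 12]$ (namely $\frac 18,\frac 38$ by Proposition~\ref{prop55}) and $a_3$ is the centroid $\frac 58$ of $[\frac 12,\frac 34]$ — to give half-distortion $\frac 1{384}$, hence $V_6=2\cdot\frac 1{384}=\frac 1{192}=0.00520833$, and to beat the constrained minima of all competing cases (in particular the alternative with one point in $(0,\frac 12)$ and two in $(\frac 12,\frac 34)$). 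The mirror points then give $\frac 32-a_3=0.875$, $\frac 32-a_2=1.125$, $\frac 32-a_1=1.375$, completing the claimed set.

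The main obstacle is that the global minimizer sits exactly on a case boundary, where $\frac 12(a_2+a_3)=\frac 12$: within each open case the unconstrained critical point of the cubic is either infeasible or attained on the boundary of that case's parameter region, so one cannot merely read off an interior stationary point but must check boundary values and confirm that the minimum is realized at the junction. The sub-cases in which a single Voronoi region straddles the discontinuity at $\frac 12$ (so that one quantizer's region carries both densities $\frac 12$ and $1$) yield the most cumbersome polynomials, and the genuine work lies in verifying that each of their constrained minima strictly exceeds $\frac 1{192}$, thereby isolating $\set{0.125,0.375,0.625,0.875,1.125,1.375}$ as the unique optimal set of six-means.
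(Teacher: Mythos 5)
Your proposal is correct and follows essentially the same route as the paper's proof: symmetry about $\frac 34$ to reduce to three points $a_1<a_2<a_3$ on $[0,\frac 34]$, Lemma~\ref{lemma3} to force $a_1<\frac 12<a_3$, a case split on the position of $a_2$ and of the Voronoi boundaries relative to the density breakpoint $\frac 12$, and minimization of the resulting explicit cubics, giving $V_6=\frac 1{192}=0.00520833$ at $\set{0.125,0.375,0.625}$ and its mirror image. Your additional observation that the minimizer sits exactly on the case boundary $\frac 12(a_2+a_3)=\frac 12$ — so that $\frac 18,\frac 38$ are the optimal two-means of the uniform density on $[0,\frac 12]$ and $\frac 58$ is the centroid of $[\frac 12,\frac 34]$ — is consistent with the paper, where both subcases of its Case~1 attain the same minimum at the same point.
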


\begin{proof}
Let $\ga:=\set{a_1, a_2, a_3, a_4, a_5, a_6}$ be an optimal set of six-means. Due to symmetry of the probability measure we can say that the elements in the optimal set will be symmetrically located on the line with respect to the element $\frac 34$, i.e.,  $0<a_1<a_2<a_3<\frac 34<a_4<a_5<a_6<\frac 32$, and $\frac 34$ is the midpoint of $a_3$ and $a_4$.
By Lemma~\ref{lemma3}, we can say that $a_1<\frac 12$, and $\frac 12<a_3$.

The following cases can arise:

\tit{Case~1. $0<a_1<a_2\leq \frac 12$.}

The following two subcases can occur.

\tit{Subcase~1. $0<a_1<a_2<\frac 12(a_2+a_3)\leq \frac 12<a_3<\frac 34$.}

In this subcase, due to symmetry the distortion error is given by
\begin{align*}
&\int \min_{a\in \ga}(x-a)^2 dP= 2 \Big(\frac{1}{2} \int_0^{\frac{1}{2}  (a_1+a_2 )}  (x-a_1 ){}^2 \, dx+\frac{1}{2} \int_{\frac{1}{2}  (a_1+a_2 )}^{\frac{1}{2} (a_2+a_3 )}  (x-a_2 ){}^2 \, dx+\int_{\frac{1}{2}}^{\frac{3}{4}}  (x-a_3 ){}^2 \, dx\\
&\qquad \qquad \qquad +\frac{1}{2} \int_{\frac{1}{2}  (a_2+a_3 )}^{\frac{1}{2}} (x-a_3 ){}^2 \, dx\Big)\\
&=\frac{1}{96} \Big(24 a_1^3+24 a_2 a_1^2-24 a_2^2 a_1-24 a_3^3-24  (a_2-4 ) a_3^2+12  (2 a_2^2-7 ) a_3+23\Big),
\end{align*}
the minimum value of which is $0.00520833$ and it occurs when $a_1=0.125$, $a_2=0.375$ and $a_3=0.625$.

\tit{Subcase~2. $0<a_1<a_2<\frac 12\leq \frac 12(a_2+a_3)<a_3<\frac 34$.}

In this subcase, due to symmetry the distortion error is given by
\begin{align*}
&\int \min_{a\in \ga}(x-a)^2 dP= 2 \Big(\frac{1}{2} \int_0^{\frac{1}{2}  (a_1+a_2 )}  (x-a_1 ){}^2 \, dx+\int_{\frac{1}{2}}^{\frac{1}{2}  (a_2+a_3 )} (x-a_2 ){}^2 \, dx+\frac{1}{2} \int_{\frac{1}{2}  (a_1+a_2 )}^{\frac{1}{2}}  (x-a_2 ){}^2 \, dx\\
&\qquad \qquad \qquad +\int_{\frac{1}{2}  (a_2+a_3 )}^{\frac{3}{4}}  (x-a_3 ){}^2 \, dx\Big)\\
&=\frac{1}{96} \Big(24 a_1^3+24 a_2 a_1^2-24 a_2^2 a_1+24 a_2^3-48 a_3^3+144 a_3^2+48 a_2^2  (a_3-1 )-108 a_3\\
&=\frac{1}{96} \Big(24 a_1^3+24 a_2 a_1^2-24 a_2^2 a_1+24 a_2^3-48 a_3^3+144 a_3^2\\
&\hspace{ 3 in }+a_2  (24-48 a_3^2 )+23\Big),
\end{align*}
the minimum value of which is $0.00520833$ and it occurs when $a_1=0.125$, $a_2=0.375$ and $a_3=0.625$.

\tit{Case~2. $0<a_1< \frac 12< a_2<a_3<\frac 34$.}

In this case, proceeding as before considering the two subcases: $\frac{1}{2} \left(a_1+a_2\right)\leq \frac{1}{2}<a_2<a_3<\frac{3}{4}$, and $\frac{1}{2}\leq \frac{1}{2} \left(a_1+a_2\right)<a_2<a_3<\frac{3}{4}$, it can be shown that the distortion error is larger than the distortion error obtained in Case~1. Therefore, this case cannot happen.

Hence, the quantization error for six-means is $V_6=0.00520833$, and it occurs when $a_1=0.125, \, a_2=0.375, \, a_3=0.625$. Due to symmetry, we have  $a_4=0.875, \, a_5=1.125$, and $a_6=1.375$. Thus, the proof of the proposition is complete (also see Figure~\ref{Fig2}).
 \end{proof}

\begin{figure}
\centerline{\includegraphics[width=10 in, height=5.2 in]{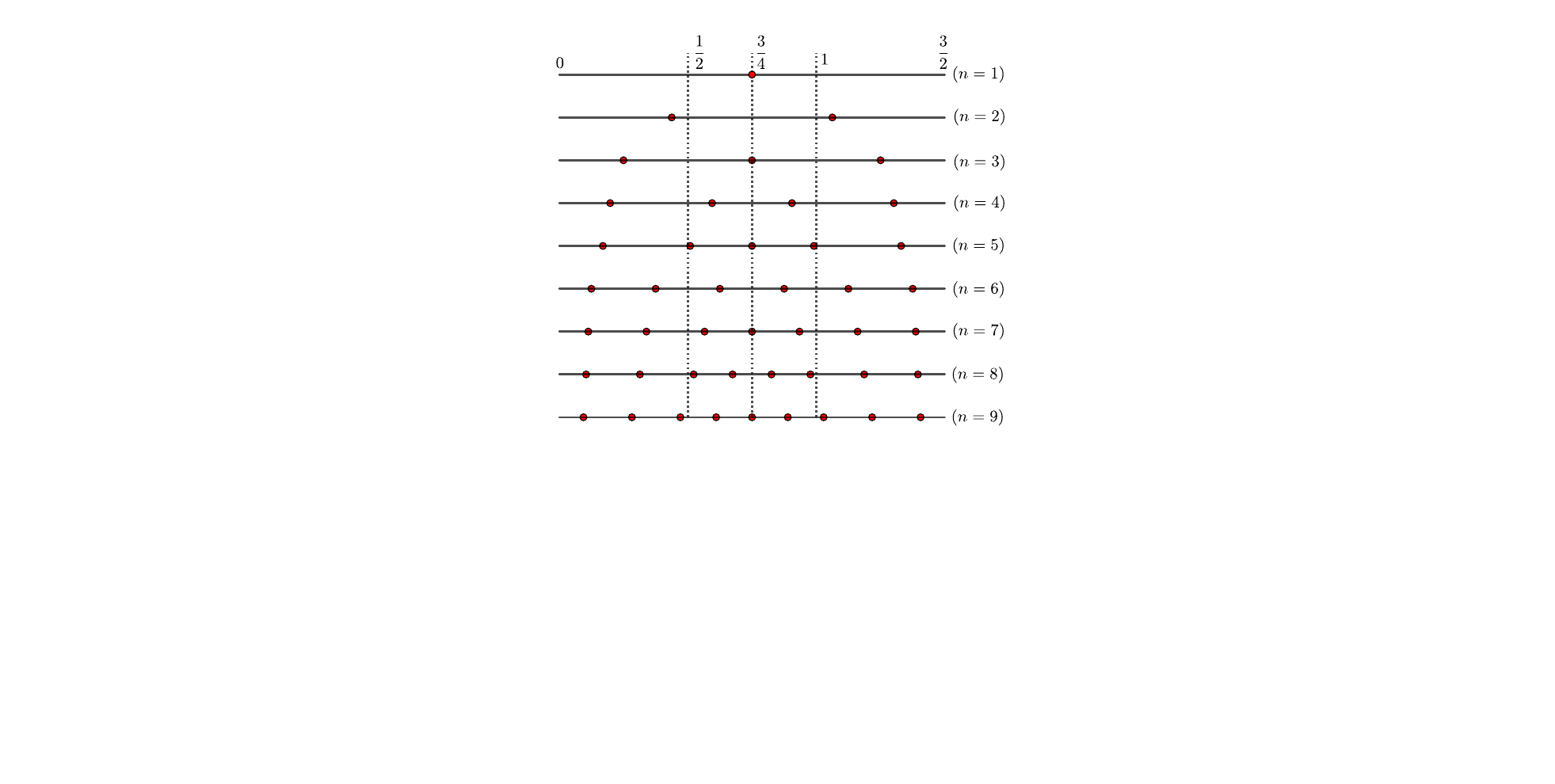}}
\vspace{-2.5 in}
\caption{Elements in the optimal sets of $n$-means for $1\leq n\leq 9$.} \label{Fig2}
\end{figure}

In the following section we calculate the optimal sets of $n$-means and the $n$th quantization errors for all $n\geq 5$.

\section{Optimal sets of $n$-means and the $n$th quantization errors for all $n\geq 5$} \label{sec3} 
Let $n\geq 5$ be a positive integer. By Remark~\ref{rem111}, we know that if $n$ is odd, an optimal set $\ga_n$ of $n$-means always contains the element $\frac 34$. Notice that whether $n$ is an even or an odd positive integer, it is enough to find the elements in an optimal set which are to  the left side of $\frac 34$, i.e., which are belonged to the interval $(0, \frac 34)$; the remaining elements in $\ga_n$ can be obtained by taking the reflections with respect to the element $\frac 34$.
By Lemma~\ref{lemma3}, an optimal set $\ga_n$ contains elements from both the open intervals $(0, \frac 12)$ and $(\frac 12, \frac 34)$. Thus, there exist two positive integers $k:=k(n)$ and $m:=m(n)$ such that
\begin{align*}
\ga_n\ii [0, \frac 12]&:=\set{a_1<a_2<\cdots<a_k}, \te{ and }\\
\ga_n\ii (\frac 12, \frac 34]&:=\left\{\begin{array}{cc}
 \set{b_1<b_2<\cdots<b_m} &\te{ if $n$ is even,}\\
  \set{b_1<b_2<\cdots<b_m<\frac 34} &\te{ if $n$ is odd.}
  \end{array}
  \right.
  \end{align*}
Observe that in the above expression, if $n$ is even, then $2(k+m)=n$; and if $n$ is odd, then $2(k+m)+1=n$. Notice that the following two cases can happen: either $\frac 12(a_k+b_1)\leq \frac 12$, or $\frac 12\leq \frac 12(a_k+b_1)$.
Whether $n$ is even or odd, let $V1(k, m)$ be the $n$th quantization error when  $\frac{a_k+b_1}{2}\leq \frac 12$, and $V2(k, m)$ be the $n$th quantization error when  $\frac 12\leq \frac{a_k+b_1}{2}$. The optimal sets of $n$-means and the $n$th quantization errors for $n=1, 2, 3, 4, 5$ are given in the previous sections. The following propositions will give the optimal sets of $n$-means and the $n$th quantization errors for all $n\geq 6$.
\begin{prop} \label{prop71}
Let $k\geq 2$ and $m=1$. Then, if $\frac 12(a_k+b_1)\leq \frac 12$, we have $a_j=\frac{(2j-1)(a_k+b_1)}{4k}$ for $1\leq j\leq k$, and
\begin{align*}
b_1=\left\{\begin{array}{cc}
E(X : X\in [\frac 12(a_k+b_1), \frac 34]) & \te{ if $n$ is even}, \\
E(X : X\in [\frac 12(a_k+b_1), \frac 12(b_1+\frac 34)]) & \te{ if $n$ is odd};
\end{array}
\right.
\end{align*}
and if $\frac 12\leq \frac 12(a_k+b_1)$, we have $a_j=\frac{(2j-1)(a_{k-1}+a_k)}{4 (k-1)}$ for $1\leq j\leq (k-1)$, $a_k=E(X : X\in [\frac{1}{2} (a_{k-1}+a_k), \frac 12(a_k+b_1)])$, and
\begin{align*}
b_1=\left\{\begin{array}{cc}
E(X : X\in [\frac 12(a_k+b_1), \frac 34]) & \te{ if $n$ is even}, \\
E(X : X\in [\frac 12(a_k+b_1), \frac 12(b_1+\frac 34)]) & \te{ if $n$ is odd}.
\end{array}
\right.
\end{align*}
The quantization errors for $n$-means are given by
\begin{align*}
 V1(k, 1)=\left\{\begin{array}{cc}
 2\Big(\frac{ (a_k+b_1 ){}^3}{192 k^2}+\frac{1}{2} \int_{\frac{1}{2}  (a_k+b_1 )}^{\frac{1}{2}}  (x-b_1 ){}^2 \, dx+\int_{\frac{1}{2}}^{\frac{3}{4}}  (x-b_1 ){}^2 \, dx\Big) & \te{ if $n$ is even},\\
 2\Big(\frac{ (a_k+b_1 ){}^3}{192 k^2}+\frac{1}{2} \int_{\frac{1}{2}  (a_k+b_1 )}^{\frac{1}{2}}  (x-b_1 ){}^2 \, dx+\int_{\frac{1}{2}}^{\frac{1}{2} (b_1+\frac{3}{4} )}  (x-b_1 ){}^2 \, dx\\
 +\int_{\frac{1}{2}  (b_1+\frac{3}{4} )}^{\frac{3}{4}}  (x-\frac{3}{4} )^2 \, dx\Big) & \te{ if $n$ is odd};
 \end{array}
 \right.
 \end{align*}
 and
 \begin{align*}
 V2(k, 1)=\left\{\begin{array}{cc}
 2\Big(\frac{(a_{k-1}+a_k){}^3}{192 (k-1)^2}+\frac{1}{2} \int_{\frac{1}{2} (a_{k-1}+a_k)}^{\frac{1}{2}} (x-a_k){}^2 \, dx+\int_{\frac{1}{2}}^{\frac{1}{2} (a_k+b_1)} (x-a_k){}^2 \, dx\\
 +\int_{\frac{1}{2} (a_k+b_1)}^{\frac{3}{4}} (x-b_1){}^2 \, dx\Big) & \te{ if $n$ is even},\\
 2\Big(\frac{ (a_{k-1}+a_k ){}^3}{192 (k-1)^2}+\frac{1}{2} \int_{\frac{1}{2}  (a_{k-1}+a_k )}^{\frac{1}{2}}  (x-a_k ){}^2 \, dx+\int_{\frac{1}{2}}^{\frac{1}{2}  (a_k+b_1 )} (x-a_k ){}^2 \, dx\\
 +\int_{\frac{1}{2}  (a_k+b_1 )}^{\frac{1}{2}  (b_1+\frac{3}{4} )}  (x-b_1 ){}^2 \, dx+\int_{\frac{1}{2}  (b_1+\frac{3}{4} )}^{\frac{3}{4}}  (x-\frac{3}{4} )^2 \, dx\Big) & \te{ if $n$ is odd}.
 \end{array}
 \right.
 \end{align*}
 \end{prop}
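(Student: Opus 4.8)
The plan is to lean on the two structural facts in Proposition~\ref{prop0}: every optimal quantizer equals the conditional expectation (centroid) of its own Voronoi region, and consecutive Voronoi regions meet at the midpoint of adjacent quantizers. Together with the symmetry of $P$ about $\frac34$ (Remark~\ref{rem111}) it suffices to pin down the quantizers in $(0,\frac34)$; the hypothesis $k\ge 2$, $m=1$ means these consist of $k$ points $a_1<\dots<a_k$ in $[0,\frac12]$, a single point $b_1\in(\frac12,\frac34)$, and—when $n$ is odd—the point $\frac34$ itself, which then owns its own cell. The two cases of the statement are exactly the two possible positions of the boundary $\frac12(a_k+b_1)$ relative to the density breakpoint $\frac12$, and each case is analyzed by locating which points sit entirely inside a constant-density interval.

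First I would treat Case~A, where $\frac12(a_k+b_1)\le\frac12$. Here the Voronoi regions of $a_1,\dots,a_k$ all lie inside $[0,\frac12(a_k+b_1)]\subseteq[0,\frac12]$, on which $f\equiv\frac12$ is constant; thus the conditional law on that interval is uniform and Proposition~\ref{prop55} (with $a=0$, $b=\frac12(a_k+b_1)$, $n=k$) forces $a_j=\frac{(2j-1)(a_k+b_1)}{4k}$. The point $b_1$ is then determined by the centroid condition on its own cell, whose left endpoint is $\frac12(a_k+b_1)$ and whose right endpoint is $\frac34$ when $n$ is even (the midpoint of $b_1$ and its reflection) or $\frac12(b_1+\frac34)$ when $n$ is odd (the midpoint of $b_1$ and $\frac34$). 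These are precisely the stated self-consistent expressions for $b_1$.

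Next I would treat Case~B, where $\frac12\le\frac12(a_k+b_1)$. Since $a_k\le\frac12$ and $a_{k-1}<a_k$, we automatically have $\frac12(a_{k-1}+a_k)<a_k\le\frac12$, so the regions of $a_1,\dots,a_{k-1}$ still lie in the constant-density interval and Proposition~\ref{prop55} on $[0,\frac12(a_{k-1}+a_k)]$ yields $a_j=\frac{(2j-1)(a_{k-1}+a_k)}{4(k-1)}$ for $1\le j\le k-1$. The remaining quantizer $a_k$ now straddles the jump at $\frac12$, so it is not governed by Proposition~\ref{prop55} but directly by the centroid condition on $[\frac12(a_{k-1}+a_k),\frac12(a_k+b_1)]$; the point $b_1$ is again the centroid of its cell exactly as in Case~A. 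Finally I would assemble the errors: by symmetry the total is twice the contribution from $(0,\frac34)$, the uniform block $a_1,\dots,a_k$ (resp.\ $a_1,\dots,a_{k-1}$) contributes $\frac{(a_k+b_1)^3}{192k^2}$ (resp.\ $\frac{(a_{k-1}+a_k)^3}{192(k-1)^2}$) via the formula $\frac{(b-a)^3t}{12n^2}$ of Proposition~\ref{prop55} with $t=\frac12$, and the straddling cell (in Case~B) together with $b_1$'s cell are handled by integrating $(x-a)^2$ against $f$, each integral split at $\frac12$ so that the correct constant density ($\frac12$ or $1$) is used on each piece; summing gives $V1(k,1)$ and $V2(k,1)$.

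The main obstacle will be the bookkeeping of the Voronoi geometry across the density discontinuity at $\frac12$: one must verify that the designated block of $a$-points genuinely lies in the constant-density interval so that Proposition~\ref{prop55} is legitimately applicable, that in Case~B exactly one point $a_k$ crosses the breakpoint, and that every integral defining $b_1$ and the errors is split at $\frac12$ with the right density on either side. The discontinuity is exactly what renders Proposition~\ref{prop55} insufficient on its own and forces the split into $V1$ and $V2$; once the geometry is correctly set up, solving the coupled centroid equations for $a_k$ and $b_1$ and verifying the displayed error formulas is routine algebra.
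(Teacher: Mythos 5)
Your proposal is correct and follows essentially the same route as the paper's own (much terser) proof: identify the block of quantizers whose Voronoi regions lie in the constant-density interval, apply Proposition~\ref{prop55} to get their equal spacing and error contribution, use the centroid condition of Proposition~\ref{prop0} for the straddling point $a_k$ and for $b_1$, and assemble the errors by symmetry with integrals split at the density breakpoint $\frac 12$. Your write-up in fact supplies the case-by-case bookkeeping that the paper leaves implicit.
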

\begin{proof}
If $\frac{a_k+b_1}{2}\leq \frac 12$, then $a_1, a_2, \cdots, a_{k}$ are uniformly distributed over the closed interval $[0, \frac{a_k+b_1}{2}]$; on the other hand,
if $\frac 12 \leq \frac{a_k+b_1}{2}$, then $a_1, a_2, \cdots, a_{k-1}$ are uniformly distributed over the closed interval $[0, \frac{a_{k-1}+a_k}2]$.
 Thus, by Proposition~\ref{prop0} and Proposition~\ref{prop55}, the expressions for $a_j$ and $b_j$ can be obtained. With the help of the formula given in Proposition~\ref{prop55}, the quantization errors are also obtained as routine.
\end{proof}
\begin{prop} \label{prop72}
Let $k=1$ and $m\geq 2$. Then, if $\frac 12(a_1+b_1)\leq \frac 12$, we have $a_1=E(X : X\in [0, \frac 12(a_1+b_1)]$, $b_1=E(X : X\in [\frac 12(a_1+b_1), \frac 12(b_1+b_2)])$, and
\begin{align*}
b_{1+j}=\left\{\begin{array}{cc}
\frac 12(b_1+b_2)+\frac{(2j-1)}{2(m-1)}(\frac 34-\frac 12(b_1+b_2)) \te{ for } 1\leq j\leq m-1 & \te{ if $n$ is even}, \\
\frac 12(b_1+b_2)+\frac{(2j-1)}{2(m-1)}(\frac 34-b_2) \te{ for } 1\leq j\leq m-1 & \te{ if $n$ is odd};
\end{array}
\right.
\end{align*}
and if $\frac 12\leq \frac 12(a_1+b_1)$, we have $a_1=E(X : X\in [0, \frac 12(a_1+b_1)]$, and
\begin{align*}
b_{j}=\left\{\begin{array}{cc}
\frac 12(a_1+b_1)+\frac{(2j-1)}{2m}(\frac 34-\frac 12(a_1+b_1))$ for $1\leq j\leq m & \te{ if $n$ is even}, \\
\frac 12(a_1+b_1)+\frac{(2j-1)}{2m}(\frac 34-b_1) \te{ for } 1\leq j\leq m  & \te{ if $n$ is odd};
\end{array}
\right.
\end{align*}
The quantization errors for $n$-means are given by
\begin{align*}
 V1(1, m)=\left\{\begin{array}{cc}
  2\Big(\frac{1}{2} \int_0^{\frac{1}{2}  (a_1+b_1 )}  (x-a_1 ){}^2 \, dx+\frac{1}{2} \int_{\frac{1}{2}  (a_1+b_1 )}^{\frac{1}{2}}  (x-b_1 ){}^2 \, dx\\
  +\int_{\frac{1}{2}}^{\frac{1}{2}  (b_1+b_2 )}  (x-b_1 ){}^2 \, dx+\frac{(3-2(b_1+b_2))^3}{768(m-1)^2}\Big) & \te{ if $n$ is even},\\
 2\Big(\frac{1}{2} \int_0^{\frac{1}{2}  (a_1+b_1 )}  (x-a_1 ){}^2 \, dx+\frac{1}{2} \int_{\frac{1}{2}  (a_1+b_1 )}^{\frac{1}{2}}  (x-b_1 ){}^2 \, dx\\+\int_{\frac{1}{2}}^{\frac{1}{2}  (b_1+b_2 )}  (x-b_1 ){}^2 \, dx+\frac{ (3-4 b_2 ){}^3}{768 (m-1)^2}+\frac{1}{24}  (b_2-b_1 ){}^3\Big) & \te{ if $n$ is odd};
 \end{array}
 \right.
 \end{align*}
 and
 \begin{align*}
 V2(1, m)=\left\{\begin{array}{cc}
 2\Big(\frac{1}{2} \int_0^{\frac{1}{2}} \left(x-a_1\right){}^2 \, dx+\int_{\frac{1}{2}}^{\frac{1}{2} \left(a_1+b_1\right)} \left(x-a_1\right){}^2 \, dx
 +\frac{\left(3-2 (a_1+b_1)\right){}^3}{768 m^2}\Big) & \te{ if $n$ is even},\\
 2\Big(\frac{1}{2} \int_0^{\frac{1}{2}} \left(x-a_1\right){}^2 \, dx+\int_{\frac{1}{2}}^{\frac{1}{2} \left(a_1+b_1\right)} \left(x-a_1\right){}^2 \, dx\\
 +\frac{(3-4b_1){}^3}{768 m^2}+\frac{1}{24} (b_1-a_1){}^3\Big) & \te{ if $n$ is odd}.
 \end{array}
 \right.
 \end{align*}
\end{prop}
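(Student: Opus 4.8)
The plan is to argue exactly as in the proof of Proposition~\ref{prop71}, reducing everything to the two building blocks Proposition~\ref{prop55} and Proposition~\ref{prop56} together with the centroid conditions of Proposition~\ref{prop0}, and treating the point $\frac34$ (when $n$ is odd) as a pinned quantizer. By Remark~\ref{rem111} it suffices to locate the quantizers lying in $(0,\frac34]$ (for $n$ odd the point $\frac34$ itself is a quantizer, and all remaining quantizers are obtained by reflecting in $\frac34$), and then to double the left-hand contribution to the distortion. With $k=1$ the only quantizer in $[0,\frac12]$ is $a_1$, while $b_1<b_2<\cdots<b_m$ are the quantizers in $(\frac12,\frac34)$.

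First I would record the Voronoi boundaries: the boundary between $a_1$ and $b_1$ is $\frac12(a_1+b_1)$, the boundary between $b_i$ and $b_{i+1}$ is $\frac12(b_i+b_{i+1})$, and the right end of $b_m$'s region is $\frac34$ (for $n$ even this is the Voronoi boundary with the reflected block, forced by symmetry; for $n$ odd it is the boundary between $b_m$ and $\frac34$). By Proposition~\ref{prop0}$(iii)$ each quantizer equals the conditional expectation of $X$ over its Voronoi region, which immediately gives $a_1=E(X:X\in[0,\frac12(a_1+b_1)])$ and, in the case $\frac12(a_1+b_1)\le\frac12$, the formula $b_1=E(X:X\in[\frac12(a_1+b_1),\frac12(b_1+b_2)])$. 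The reason these two are kept as implicit centroid equations is that their Voronoi regions meet the line $x=\frac12$, where the density $f$ jumps between $\frac12$ and $1$, so the simple uniform formula does not apply to them.

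The key structural observation is that the remaining quantizers lie in a region where $f$ is constant equal to $1$. Indeed, since $b_1>\frac12$ we have $\frac12(b_1+b_2)>\frac12$, so in the case $V1$ the interval $[\frac12(b_1+b_2),\frac34]$ carrying $b_2,\dots,b_m$ lies inside $[\frac12,1]$; similarly in the case $V2$, where $\frac12(a_1+b_1)\ge\frac12$, the interval $[\frac12(a_1+b_1),\frac34]$ carrying $b_1,\dots,b_m$ lies inside $[\frac12,1]$. On such an interval the centroid conditions for the interior quantizers force equal spacing while the left-endpoint condition fixes the half-spacing offset; hence, applying Proposition~\ref{prop55} when $n$ is even (no quantizer sits at $\frac34$, and $\frac34$ is a Voronoi boundary) and Proposition~\ref{prop56} when $n$ is odd (with $b=\frac34$ playing the role of the pinned right endpoint, which by symmetry is the centroid of its own symmetric Voronoi region) yields the explicit expressions for $b_{1+j}$ (resp.\ $b_j$) displayed in the statement.

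Finally I would assemble the quantization error. By symmetry the total distortion is twice that of the left-hand half, which splits into the contribution of $a_1$'s region, the contribution of $b_1$'s region, and the contribution of the constant-density block. The first two are evaluated directly as the displayed integrals, splitting at $x=\frac12$ wherever a region crosses the density jump, while the block contribution is read off from the error formulas of Proposition~\ref{prop55} and Proposition~\ref{prop56}; in the odd case the term $\frac{t}{3}(\,\cdot\,)^3$ of Proposition~\ref{prop56} records the left half of the error of the symmetric region of $\frac34$, which the overall factor $2$ then completes. I expect the only genuinely delicate point to be the odd case: one must justify that $\frac34$ may be treated as the pinned right endpoint of the constant-density sub-problem so that Proposition~\ref{prop56} applies, and this is precisely where the symmetry of $P$ about $\frac34$ is used. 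The remaining work — carrying out the integrals and rewriting $(\frac34-\frac12(b_1+b_2))^3$ as $\frac{(3-2(b_1+b_2))^3}{64}$, and so on — is routine.
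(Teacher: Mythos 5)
Your proposal is correct and follows essentially the same route as the paper's own (much terser) proof: reduce by the symmetry of Remark~\ref{rem111} to the quantizers in $(0,\frac34]$, obtain $a_1$ and $b_1$ as centroids via Proposition~\ref{prop0} since their Voronoi regions straddle the density jump at $\frac12$, and handle the block lying in the constant-density region by Proposition~\ref{prop55} for $n$ even and Proposition~\ref{prop56} (with $\frac34$ as pinned endpoint) for $n$ odd, then double the left-half distortion. Your added justifications — why $\frac34$ may be pinned, and why the $\frac t3(\cdot)^3$ term is the half-error of its symmetric region — are details the paper leaves implicit, not a different method.
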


\begin{proof}
If $\frac 12(a_1+b_1)\leq \frac 12$ and $n$ is even, then $b_2, b_3, \cdots, b_m$ are uniformly distributed over the closed interval $[\frac 12(b_1+b_2), \frac 34]$, and so by Proposition~\ref{prop55}, the expressions for $b_{i+j}$, where $1\leq j\leq (m-1)$, and the corresponding quantization error can be obtained. On the other hand, if $\frac 12(a_1+b_1)\leq \frac 12$ and $n$ is odd, then as $\set{b_2<b_3<\cdots<b_m<\frac 34}$ is the set of optimal quantizers with respect to the probability distribution $P(\cdot|_{[\frac 12(b_1+b_2), \frac 34]})$ with constant density $f(x)=1$ for all $x\in [\frac 12(b_1+b_2), \frac 34]$, the expressions for $b_j$, and the corresponding quantization error can be obtained using Proposition~\ref{prop56}.
Likewise, if $\frac 12\leq \frac 12(a_1+b_1)$, using Proposition~\ref{prop55} and Proposition~\ref{prop56},  we get the expressions for the optimal quantizers and the corresponding quantization error.
\end{proof}

\begin{prop} \label{prop73}
Let $k\geq 2$ and $m\geq 2$. Then, if $\frac 12(a_k+b_1)\leq \frac 12$, we have $a_j=\frac{(2j-1)(a_k+b_1)}{4k}$ for $1\leq j\leq k$,
$b_1=E(X : X\in [\frac 12(a_k+b_1), \frac 12(b_1+b_2)])$, and
\begin{align*}
b_{1+j}=\left\{\begin{array}{cc}
\frac 12(b_1+b_2)+\frac{(2j-1)}{2(m-1)}(\frac 34-\frac 12(b_1+b_2)) \te{ for } 1\leq j\leq m-1 & \te{ if $n$ is even}, \\
\frac 12(b_1+b_2)+\frac{(2j-1)}{2(m-1)}(\frac 34-b_2) \te{ for } 1\leq j\leq m-1 & \te{ if $n$ is odd};
\end{array}
\right.
\end{align*}
and if $\frac 12\leq \frac 12(a_k+b_1)$, we have $a_j=\frac{(2j-1)(a_{k-1}+a_k)}{4 (k-1)}$ for $1\leq j\leq (k-1)$, $a_k=E(X : X\in [\frac{1}{2} (a_{k-1}+a_k), \frac 12(a_k+b_1)])$, and
\begin{align*}
b_{j}=\left\{\begin{array}{cc}
\frac 12(a_k+b_1)+\frac{(2j-1)}{2m}(\frac 34-\frac 12(a_k+b_1))$ for $1\leq j\leq m & \te{ if $n$ is even}, \\
\frac 12(a_k+b_1)+\frac{(2j-1)}{2m}(\frac 34-b_1) \te{ for } 1\leq j\leq m& \te{ if $n$ is odd};
\end{array}
\right.
\end{align*}
The quantization errors for $n$-means are given by
\begin{align*}
 V1(k, m)=\left\{\begin{array}{cc}
 2\Big(\frac{ (a_k+b_1 ){}^3}{192 k^2}+\frac{1}{2} \int_{\frac{1}{2}  (a_k+b_1 )}^{\frac{1}{2}}  (x-b_1 ){}^2 \, dx+\int_{\frac{1}{2}}^{\frac{1}{2} (b_1+b_2)}  (x-b_1 ){}^2 \, dx\\
 +\frac{(3-2(b_1+b_2))^3}{768(m-1)^2}\Big) & \te{ if $n$ is even},\\
 2\Big(\frac{ (a_k+b_1 ){}^3}{192 k^2}+\frac{1}{2} \int_{\frac{1}{2}  (a_k+b_1 )}^{\frac{1}{2}}  (x-b_1 ){}^2 \, dx\\+\int_{\frac{1}{2}}^{\frac{1}{2}  (b_1+b_2 )}  (x-b_1 ){}^2 \, dx+\frac{ (3-4 b_2 ){}^3}{768 (m-1)^2}+\frac{1}{24}  (b_2-b_1 ){}^3\Big) & \te{ if $n$ is odd};
 \end{array}
 \right.
 \end{align*}
 and
 \begin{align*}
 V2(k, m)=\left\{\begin{array}{cc}
 2\Big(\frac{(a_{k-1}+a_k){}^3}{192 (k-1)^2}+\frac{1}{2} \int_{\frac{1}{2} (a_{k-1}+a_k)}^{\frac{1}{2}} (x-a_k){}^2 \, dx+\int_{\frac{1}{2}}^{\frac{1}{2} (a_k+b_1)} (x-a_k){}^2 \, dx\\
 +\frac{\left(3-2 (a_k+b_1)\right){}^3}{768 m^2}\Big) & \te{ if $n$ is even},\\
 2\Big(\frac{ (a_{k-1}+a_k ){}^3}{192 (k-1)^2}+\frac{1}{2} \int_{\frac{1}{2}  (a_{k-1}+a_k )}^{\frac{1}{2}}  (x-a_k ){}^2 \, dx+\int_{\frac{1}{2}}^{\frac{1}{2}  (a_k+b_1 )} (x-a_k ){}^2 \, dx\\
 +\frac{(3-4b_1){}^3}{768 m^2}+\frac{1}{24} (b_1-a_k){}^3\Big) & \te{ if $n$ is odd}.
 \end{array}
 \right.
 \end{align*}
 \end{prop}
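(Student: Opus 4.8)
The plan is to follow exactly the strategy of Propositions~\ref{prop71} and~\ref{prop72}, combining the centroid condition of Proposition~\ref{prop0}$(iii)$ with the explicit uniform-quantizer formulas of Propositions~\ref{prop55} and~\ref{prop56}. By Remark~\ref{rem111} it suffices to describe the elements of $\ga_n$ lying in $(0, \frac 34]$ and then reflect through $\frac 34$, which accounts for the factor $2$ in every error formula. The structural picture to establish is that, to the left of $\frac 34$, the optimal set splits into three runs: a run of points lying in the constant-density region $[0, \frac 12]$ (where $f\equiv \frac 12$), a single ``boundary'' point whose Voronoi region straddles the discontinuity at $x=\frac 12$, and a run of points lying in the constant-density region $(\frac 12, \frac 34]$ (where $f\equiv 1$).

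First I would treat the subcase $\frac 12(a_k+b_1)\le \frac 12$. Here the Voronoi regions of $a_1,\dots,a_k$ all lie inside $[0,\frac 12]$, so the equal-spacing argument from the Claim in the proof of Proposition~\ref{prop56} (centroid equals midpoint under constant density, forcing equal gaps together with a final half-gap beyond $a_k$) shows that $a_1,\dots,a_k$ are uniformly distributed over $[0,\frac 12(a_k+b_1)]$; Proposition~\ref{prop55} then gives $a_j=\frac{(2j-1)(a_k+b_1)}{4k}$. The boundary point $b_1$ has Voronoi region $[\frac 12(a_k+b_1),\frac 12(b_1+b_2)]$, which crosses $x=\frac 12$, so its location is pinned down only implicitly by $b_1=E(X : X\in [\frac 12(a_k+b_1),\frac 12(b_1+b_2)])$. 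The remaining points lie in $(\frac 12,\frac 34]$: for even $n$ they are uniform over $[\frac 12(b_1+b_2),\frac 34]$ by Proposition~\ref{prop55}, while for odd $n$ the point $\frac 34$ is forced into $\ga_n$ as the right endpoint, so Proposition~\ref{prop56} applies to the $m$-point set $\set{b_2<\cdots<b_m<\frac 34}$. The symmetric subcase $\frac 12\le \frac 12(a_k+b_1)$ is handled identically, except that now it is $a_k$ whose Voronoi region straddles $\frac 12$: thus $a_1,\dots,a_{k-1}$ are uniform over $[0,\frac 12(a_{k-1}+a_k)]$ (Proposition~\ref{prop55}), $a_k$ is given by its centroid, and all of $b_1,\dots,b_m$ lie in the constant region $(\frac 12,\frac 34]$ and are treated by Proposition~\ref{prop55} (even $n$) or Proposition~\ref{prop56} (odd $n$).

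With the positions in hand, the error formulas follow by adding the contributions region by region. Each uniform run contributes a closed-form term from Proposition~\ref{prop55}, with $t=\frac 12$ on $[0,\frac 12]$ and $t=1$ on $(\frac 12,\frac 34]$; for instance the run $a_1,\dots,a_k$ contributes $\frac{(\frac 12(a_k+b_1))^3\cdot\frac 12}{12k^2}=\frac{(a_k+b_1)^3}{192k^2}$, and in the even case the run in $(\frac 12,\frac 34]$ contributes $\frac{(\frac 34-\frac 12(b_1+b_2))^3}{12(m-1)^2}=\frac{(3-2(b_1+b_2))^3}{768(m-1)^2}$, while in the odd case Proposition~\ref{prop56} replaces this by $\frac{(3-4b_2)^3}{768(m-1)^2}+\frac{1}{24}(b_2-b_1)^3$. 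The single straddling Voronoi region is left as the explicit integral of $(x-b_1)^2$ (resp.\ $(x-a_k)^2$), split at $x=\frac 12$ into a density-$\frac 12$ piece and a density-$1$ piece, exactly as displayed.

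The step demanding the most care is the bookkeeping at the discontinuity $x=\frac 12$: one must correctly identify which single point's Voronoi region contains $\frac 12$ in each subcase, split that region's integral at $\frac 12$ to respect the jump in $f$, and track the constants $192k^2$, $768(m-1)^2$, and $768m^2$ arising from the two density values $t=\frac 12$ and $t=1$. Everything else reduces to the routine substitutions justified by Propositions~\ref{prop55}, \ref{prop56}, and~\ref{prop0}, precisely as in Propositions~\ref{prop71} and~\ref{prop72}; note that this proposition only derives the candidate positions and errors for a fixed admissible pair $(k,m)$ and subcase, the minimization over $k$, $m$ being deferred.
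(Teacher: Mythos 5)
Your proposal is correct and follows essentially the same route as the paper: the paper's own proof is a single remark that Proposition~\ref{prop73} is a ``mixture'' of Propositions~\ref{prop71} and~\ref{prop72}, and your argument is precisely that mixture written out in full, with the same decomposition into uniform runs plus one straddling Voronoi region, the same appeals to Propositions~\ref{prop0}, \ref{prop55}, and~\ref{prop56}, and the correct constants $\frac{(a_k+b_1)^3}{192k^2}$, $\frac{(3-2(b_1+b_2))^3}{768(m-1)^2}$, and $\frac{1}{24}(b_2-b_1)^3$. If anything, your write-up supplies the details the paper leaves implicit.
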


\begin{proof}
Notice that Proposition~\ref{prop73} is a mixture of Proposition~\ref{prop71} and Proposition~\ref{prop72}, and thus the proof follows in the similar lines.
\end{proof}

\begin{lemma} \label{lemma24}
For any even positive integer $n\geq 4$, let $\ga_n$ be an optimal set of $n$-means for $P$. Assume that $\te{card}(\ga_n\ii [0, \frac 12])=k:=k(n)$ and $\te{card}(\ga_n\ii (\frac 12, \frac 34))=m:=m(n)$ for some positive integers $k$ and $m$. Then, either $\te{card}(\ga_{n+2}\ii [0, \frac 12])=k+1$ and $\te{card}(\ga_{n+2}\ii (\frac 12, \frac 34))=m$, or $\te{card}(\ga_{n+2}\ii [0, \frac 12])=k$ and $\te{card}(\ga_{n+2}\ii (\frac 12, \frac 34))=m+1$.
\end{lemma}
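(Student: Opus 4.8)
The plan is to reduce the statement to two monotonicity assertions and then extract the dichotomy from a counting identity. First, observe that by the symmetry of $P$ about $\frac34$ (Remark~\ref{rem111}) together with Lemma~\ref{lemma3}, for even $n\ge 4$ an optimal set $\ga_n$ has $k=k(n)\ge 1$ quantizers in $[0,\frac12]$ and $m=m(n)\ge 1$ quantizers in $(\frac12,\frac34)$, with $k+m=\frac n2$. Applying the same description to $\ga_{n+2}$ gives $k(n+2)+m(n+2)=\frac{n+2}{2}=k+m+1$. Hence it suffices to prove the two monotonicity inequalities $k(n+2)\ge k(n)$ and $m(n+2)\ge m(n)$: combined with the identity $k(n+2)+m(n+2)=k+m+1$, the differences $k(n+2)-k$ and $m(n+2)-m$ are nonnegative integers summing to $1$, so $(k(n+2),m(n+2))$ must equal either $(k+1,m)$ or $(k,m+1)$, which is precisely the claim.

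The core of the argument is to establish these monotonicity inequalities, and I would do this by an exchange (rearrangement) argument. Recall from Proposition~\ref{prop55} that $j$ equally spaced quantizers whose Voronoi cells tile a subinterval of length $\ell$ carrying constant density $t$ contribute distortion $\frac{\ell^3 t}{12 j^2}$, a quantity that is strictly decreasing in $j$ and whose marginal gain $\frac{\ell^3 t}{12}\bigl(\frac1{j^2}-\frac1{(j+1)^2}\bigr)$ is itself strictly decreasing in $j$. The left region $[0,\frac12]$ carries density $\frac12$ over length $\frac12$, while the middle region $(\frac12,\frac34)$ carries density $1$ over length $\frac14$; thus each region's contribution is essentially a convex, decreasing function of the number of quantizers assigned to it. The strategy is to argue by contradiction: if, say, $m(n+2)\le m(n)-1$ (so $k(n+2)\ge k(n)+2$), then transferring a single quantizer from the over-populated interval back across the boundary $x=\frac12$ produces an admissible set of $n+2$ means whose total distortion is strictly smaller, contradicting the optimality of $\ga_{n+2}$; the symmetric supposition $k(n+2)\le k(n)-1$ is handled identically.

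To make the comparison rigorous I would use the closed-form errors supplied by Propositions~\ref{prop71}, \ref{prop72}, and \ref{prop73}. Since $\ga_{n+2}$ is optimal, its distortion equals $\min\{V1(k',m'),V2(k',m')\}$ for its actual counts $(k',m')$; I would compare this against the explicitly computable value attained by the competitor allocation obtained by shifting one point, for instance $\min\{V1(k+1,m),V2(k+1,m)\}$ under the hypothesis $(k',m')=(k+2,m-1)$. The convexity noted above, namely that the marginal benefit of adding a point to a region decreases as that region fills up, reduces the desired strict inequality to an elementary comparison of the rational expressions appearing in those propositions, confirming that the more balanced allocation strictly lowers the error.

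The main obstacle is the coupling at the density jump $x=\frac12$. Because the common Voronoi boundary of the last quantizer $a_k$ in $[0,\frac12]$ and the first quantizer $b_1$ in $(\frac12,\frac34)$ can fall on either side of $\frac12$, the two regions do not decouple into two independent $\frac{\ell^3 t}{12 j^2}$ terms; this is exactly why each of Propositions~\ref{prop71}--\ref{prop73} splits into the cases $\frac12(a_k+b_1)\le\frac12$ (error $V1$) and $\frac12\le\frac12(a_k+b_1)$ (error $V2$). Consequently the exchange comparison must be carried out separately for each combination of active cases for $\ga_n$, $\ga_{n+2}$, and the competitor, and the convexity and monotonicity of the marginal gains must be verified through these boundary terms rather than on the clean separated model. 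Organizing this finite case analysis, rather than any single inequality, is the crux of the proof.
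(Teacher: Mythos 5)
Your reduction is clean and valid as far as it goes: by Remark~\ref{rem111} and Lemma~\ref{lemma3}, for even $n\ge 4$ one has $k(n)+m(n)=\frac n2$, hence $k(n+2)+m(n+2)=k(n)+m(n)+1$, and the dichotomy would indeed follow from the two monotonicity inequalities $k(n+2)\ge k(n)$ and $m(n+2)\ge m(n)$. This is also a genuinely different organization from the paper's, which proceeds by induction on even $n$: the base case $n=4$ is read off from Propositions~\ref{prop3} and~\ref{prop6}, and the inductive step compares the distortion errors $V(\ell, N+4-\ell)$ over all allocations $1\le\ell\le N+3$ and observes that the minimum occurs only at the two allocations permitted by the lemma.

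However, there is a genuine gap exactly where the mathematical content lies: the monotonicity inequalities are never proved. Your exchange argument needs, at minimum, that the allocation-wise optimal distortion $F(k,m)=\min\set{V1(k,m), V2(k,m)}$ of Definition~\ref{defi000} has decreasing marginal gains in each coordinate (a discrete convexity property); combined with optimality of $\ga_n$ and $\ga_{n+2}$ at their respective levels, the standard resource-allocation argument would then force $k(n+2)\in\set{k(n),k(n)+1}$. But the convexity you invoke is only available (via Proposition~\ref{prop55}) for the decoupled model of a fixed interval of length $\ell$ with constant density, where the error is $\frac{\ell^3 t}{12 j^2}$. In the actual problem the two regions are coupled: the Voronoi boundary $\frac 12(a_k+b_1)$ --- and with it the effective lengths of the two regions --- moves as $(k,m)$ changes, and the expressions in Propositions~\ref{prop71}--\ref{prop73} are not closed-form functions of $(k,m)$ alone but involve $a_k$, $b_1$, $b_2$, which are defined implicitly as conditional expectations over regions whose endpoints depend on those same points. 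So the ``elementary comparison of rational expressions'' you defer to is not available off the shelf; one must establish the required inequalities through the $V1$/$V2$ case switching and through this implicit dependence. You correctly identify this as the crux, but identifying the crux is not resolving it: as written, the proposal is a strategy whose decisive step is asserted, not proved. (For what it is worth, this is precisely the step the paper itself disposes of by the computational comparison of all $V(\ell, N+4-\ell)$ in its inductive step, so completing your route would require supplying an argument the paper also leaves at the level of computation.)
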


\begin{proof}
For any even positive integer $n\geq 4$, let $\te{card}(\ga_n\ii [0, \frac 12])=k:=k(n)$ and $\te{card}(\ga_n\ii (\frac 12, \frac 34))=m:=m(n)$ for some positive integers $k$ and $m$. Let $V(k(n), m(n))$ be the corresponding distortion error.
By Proposition~\ref{prop3} and Proposition~\ref{prop6}, we know that $\te{card}(\ga_4\ii [0, \frac 12])=1$, $\te{card}(\ga_4\ii(\frac 12, \frac 34))=1$, $\te{card}(\ga_6\ii [0, \frac 12])=2$, and $\te{card}(\ga_6\ii(\frac 12, \frac 34))=1$
Thus, the lemma is true for $n=4$.
Let the lemma be true for $n=N$ for some even positive integer $N\geq 4$. Then, $\te{card}(\ga_N\ii [0, \frac 12])=k(N) \te{ and } \te{card}(\ga_N\ii (\frac 12, \frac 34))=m(N)$ imply that
either $\te{card}(\ga_{N+2}\ii [0, \frac 12])=k(N)+1$ and $\te{card}(\ga_{N+2}\ii (\frac 12, \frac 34))=m(N)$, or $\te{card}(\ga_{N+2}\ii [0, \frac 12])=k(N)$ and $\te{card}(\ga_{N+2}\ii (\frac 12, \frac 34))=m(N)+1$. Suppose that
$\te{card}(\ga_{N+2}\ii [0, \frac 12])=k(N)+1$ and $\te{card}(\ga_{N+2}\ii (\frac 12, \frac 34))=m(N)$ hold. Now, for the given $N$, by calculating the distortion errors $V(\ell, N+4-\ell)$ for all $1\leq \ell \leq N+3$, we see that the distortion error is smallest if $\te{card}(\ga_{N+4}\ii [0, \frac 12])=k(N)+2 \te{ and } \te{card}(\ga_{N+2}\ii (\frac 12, \frac 34)) =m(N), \te{ or if } \te{card}(\ga_{N+4}\ii [0, \frac 12])=k(N)+1 \te{ and } \te{card}(\ga_{N+4}\ii (\frac 12, \frac 34))=m(N)+1$, i.e., the lemma is true for $n=N+2$ whenever it is true for $n=N$. Similarly, we can show that the lemma is true for $n=N+2$ if $\te{card}(\ga_{N+2}\ii [0, \frac 12])=k(N) \te{ and } \te{card}(\ga_{N+2}\ii (\frac 12, \frac 34))=m(N)+1$ hold.  Thus, by the induction principle, the proof of the lemma is complete.
 \end{proof}

Proceeding in the similar lines as Lemma~\ref{lemma24}, the following lemma can be proved.

\begin{lemma} \label{lemma25}
For any odd positive integer $n\geq 5$, let $\ga_n$ be an optimal set of $n$-means for $P$. Assume that $\te{card}(\ga_n\ii [0, \frac 12])=k:=k(n)$ and $\te{card}(\ga_n\ii (\frac 12, \frac 34))=m:=m(n)$ for some positive integers $k$ and $m$. Then, either $\te{card}(\ga_{n+2}\ii [0, \frac 12])=k+1$ and $\te{card}(\ga_{n+2}\ii (\frac 12, \frac 34))=m$, or $\te{card}(\ga_{n+2}\ii [0, \frac 12])=k$ and $\te{card}(\ga_{n+2}\ii (\frac 12, \frac 34))=m+1$.
\end{lemma}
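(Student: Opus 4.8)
The plan is to follow the inductive argument of Lemma~\ref{lemma24} essentially verbatim in structure, changing only the base case and the parity bookkeeping. By Remark~\ref{rem111}, for odd $n$ the point $\frac 34$ always belongs to $\ga_n$ and the remaining $n-1$ points are placed symmetrically about $\frac 34$; hence it suffices to track the $\frac{n-1}{2}$ points of $\ga_n$ lying in $(0, \frac 34)$ and record how they split between $[0, \frac 12]$ and $(\frac 12, \frac 34)$, so that $k(n)+m(n)=\frac{n-1}{2}$ and, by Lemma~\ref{lemma3}, both $k(n)\geq 1$ and $m(n)\geq 1$.

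For the base case I take $n=5$. Proposition~\ref{prop4} gives $\ga_5=\set{0.169821, 0.509464, \frac 34, 0.990536, 1.33018}$, so that $k(5)=1$ and $m(5)=1$. To anchor the induction I then pin down $\ga_7$ by comparing, through the explicit formulas for $V1(k, m)$ and $V2(k, m)$ in Propositions~\ref{prop71}--\ref{prop73}, the two admissible partitions of its three points lying to the left of $\frac 34$, namely $(k, m)=(2, 1)$ and $(k, m)=(1, 2)$; the smaller distortion error identifies $\ga_7$, and in either outcome the count in exactly one of the two subintervals increases by one relative to $\ga_5$. This establishes the lemma for $n=5$.

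For the inductive step I assume the lemma holds for some odd $N\geq 5$, so the configuration of $\ga_{N+2}$ is one of the two asserted types; say $\te{card}(\ga_{N+2}\ii[0, \frac 12])=k(N)+1$ and $\te{card}(\ga_{N+2}\ii(\frac 12, \frac 34))=m(N)$, the reverse case being handled by the same reasoning. Holding this configuration fixed, I evaluate the quantization errors $V1(k, m)$ and $V2(k, m)$ of Propositions~\ref{prop71}, \ref{prop72}, and \ref{prop73} over all admissible partitions $(k, m)$ of the $\frac{N+3}{2}$ points of $\ga_{N+4}$ lying in $(0, \frac 34)$, and show that the minimum is attained either at $k=k(N)+2,\ m=m(N)$ or at $k=k(N)+1,\ m=m(N)+1$. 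Each of these differs from the $\ga_{N+2}$ configuration by a single additional point in exactly one subinterval, which is precisely the assertion of the lemma for $n=N+2$; the induction principle then yields the result for all odd $n\geq 5$.

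As in Lemma~\ref{lemma24}, the crux is the discrete minimization: one must show that among all partitions $(k, m)$ with $k+m$ fixed, the distortion $V1$ or $V2$ is minimized by moving only one point relative to the previous optimal split, rather than by a coarser reshuffling. Since $f$ equals $\frac 12$ on $[0, \frac 12]$ but $1$ on $[\frac 12, \frac 34]$, the per-interval error contributions---which by Proposition~\ref{prop55} are proportional to the interval's mass times the cube of its length divided by the square of the number of points allotted to it---decrease at different rates as points are added, and the argument reduces to checking that the marginal decrease from adding a point to $[0, \frac 12]$ versus to $(\frac 12, \frac 34)$ crosses over in the expected monotone fashion. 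This is the only genuinely delicate point; once the base configurations $\ga_5$ and $\ga_7$ are fixed, it amounts to a finite convexity/unimodality comparison of $V1(k, m)$ and $V2(k, m)$ carried out at each inductive stage.
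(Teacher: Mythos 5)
Your proposal takes essentially the same approach as the paper: the paper proves Lemma~\ref{lemma25} simply by repeating the induction of Lemma~\ref{lemma24} for odd $n$, i.e., a base case anchored in the explicit low-$n$ propositions (Proposition~\ref{prop4} giving $k(5)=m(5)=1$) followed by an inductive step that compares the distortion errors $V1(\ell, \cdot)$ and $V2(\ell, \cdot)$ from Propositions~\ref{prop71}--\ref{prop73} over all admissible splits to conclude that the minimizing configuration for $\ga_{N+4}$ is adjacent to that of $\ga_{N+2}$. Your write-up is structurally identical, and like the paper it leaves the key discrete minimization as an asserted computation rather than a proved monotonicity statement (a gap you, unlike the paper, at least flag explicitly as the crux).
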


 \begin{defi} \label{defi000}
 Define a real valued function $F(k, m)$ on the domain $\D N\times \D N \setminus\set{(1,1)}$ such that
 \[F(k, m)=\min\set{V1(k,m), V2(k, m)},\]
 where $V1(k, m)$ and $V2(k, m)$ are the distortion errors as defined before.
 \end{defi}

\begin{defi} \label{difi21}
Define the sequence $\set{a(n)}$ such that
\[a(n):=\left\{\begin{array}{cc}
0 & \te{ if } n=1,\\
\Big\lfloor \frac{\lfloor \frac{n}{2}\rfloor }{\sum _{k=1}^{\lfloor \frac{n}{2}\rfloor } \frac{1}{k^2}}\Big\rfloor & \te{ if } n\geq 2.
\end{array}
\right.\]

i.e.,
\begin{align*}
 \set{a(n)}_{n=1}^\infty=&\set{0, 1,1,1,1,2,2,2,2,3,3,4,4,4,4,5,5,5,5,6,6,7,7,7,7,8,8,8,8,9,9,\\
 & 10,10,10,10,11,11,11,11,12,12,13,13,13,13,14,14,14,14,15  \cdots},
\end{align*}
where $\lfloor x\rfloor$ represents the greatest integer not exceeding $x$.
\end{defi}
\begin{remark} \label{rem90}
Let $n\geq 5$. Let $k$ and $m:=\lfloor\frac n2\rfloor-k$ be the positive integers such that $\te{card}(\ga_n\ii [0,\frac 12])=k$ and $\te{card}(\ga_n\ii(\frac 12, \frac 34))=m$. Thus, for a given $n\geq 5$, if we know the exact values of $k$ and $m$, then by Proposition~\ref{prop71} through Proposition~\ref{prop73}, we can easily determine the optimal sets $\ga_n$ and the corresponding quantization error.
\end{remark}
The following algorithm helps us to calculate the exact value of $k$ and so, $m=\lfloor\frac n2\rfloor-k$.
\subsection{\tbf{Algorithm.}} \label{Algo} Let $n\geq 5$ and $F(k, m)$ be the function defined by Definition~\ref{defi000}, and let $\set{a(n)}$ be the sequence  defined by Definition~\ref{difi21}. Then, the algorithm runs as follows:

$(i)$ Write $k:=a(n)$ and calculate $F(k, \lfloor\frac n2\rfloor-k)$.

$(ii)$ If $F(k-1, \lfloor\frac n2\rfloor-k+1)<F(k, \lfloor\frac n2\rfloor-k)$ replace $k$ by $k-1$ and return, else step $(iii)$.

$(iii)$ If $F(k+1,\lfloor\frac n2\rfloor-k-1)<F(k, \lfloor\frac n2\rfloor-k)$ replace $k$ by $k+1$ and return, else step $(iv)$.

$(iv)$ End.

When the algorithm ends, then the value of $k$, obtained, is the exact value of $k$ that $\ga_n$ contains from the closed interval $[0, \frac 12]$.

\subsubsection*{\tbf{Optimal sets of $n$-means and the $n$th quantization errors for all positive integers $n\geq 5$.}}
If \( n = 5 \), then \( a(n) = 1 \), and the algorithm yields \( k = 1 \), indicating that an optimal set \( \gamma_5 \) of five-means contains one element from the closed interval \( [0, \tfrac{1}{2}] \), which is consistent with Proposition~\ref{prop4}. For \( n = 6 \), we have \( a(n) = 2 \) and the algorithm also gives \( k = 2 \), in agreement with Proposition~\ref{prop6}. Similarly, for \( n = 9 \), \( a(n) = 2 \) and the algorithm returns \( k = 3 \); and for \( n = 50 \), \( a(n) = 15 \) with the algorithm also producing \( k = 15 \). In the case of \( n = 1001 \), we find \( a(n) = 304 \), and the algorithm yields \( k = 307 \). These examples demonstrate that the combination of the sequence and the algorithm allows for the straightforward determination of the exact values of \( k \) and \( m := \lfloor \frac{n}{2} \rfloor - k \) for any integer \( n \geq 5 \). Consequently, as noted in Remark~\ref{rem90}, one can systematically obtain the optimal sets of \( n \)-means and the associated quantization errors for all integers \( n \geq 5 \) (see also Figure~\ref{Fig2}).

 \section{Conclusion and Future Work} \label{sec5}

In this paper, we investigated optimal quantization for a class of mixed probability distributions formed from two uniform distributions with partially overlapping supports. Specifically, we analyzed a symmetric mixture of uniform distributions on $[0,1]$ and $\left[\frac{1}{2}, \frac{3}{2}\right]$ with equal weights. We determined the optimal sets of $n$-means and corresponding quantization errors for $1 \leq n \leq 6$ explicitly and established a comprehensive framework to compute these quantities for all $n \geq 5$. Key results include several structural lemmas, a classification of quantizer configurations based on the location of their Voronoi boundaries, and a deterministic algorithm to compute optimal sets for general $n$.

The findings enhance our understanding of quantization behavior in the presence of overlapping supports, offering both theoretical insights and practical computational methods. This work also demonstrates how symmetry and partitioning properties of the support play crucial roles in quantizer design.

\medskip

\noindent
\textbf{Future Work.} Several avenues of research emerge from this study. Future investigations could include:
\begin{itemize}
    \item Extending the analysis to mixed distributions formed from more than two overlapping uniform distributions or from non-uniform component distributions.
    \item Studying the asymptotic behavior of optimal quantizers and quantization errors as $n \to \infty$, particularly in relation to quantization dimension.
    \item Applying the proposed algorithm to more general overlapping scenarios in higher dimensions and determining the impact of geometric configurations on quantizer structure.
    \item Exploring applications in signal processing, machine learning, and resource allocation, especially in scenarios where probability densities reflect spatial heterogeneity.
\end{itemize}
These directions will broaden the scope of optimal quantization theory and open new possibilities for both mathematical exploration and practical implementation.

\section*{Declaration}
							
							\noindent
\textbf{Authors' contributions:} Each author contributed equally to this manuscript. All authors have read and agreed to the published version of the manuscript.\\
\\
\noindent \tbf{Funding:}  This research received no external funding.\\
							
							\noindent
							\textbf{Data availability:} No data were used to support this study.\\
							\\
							\noindent
\textbf{Conflicts of interest.} The authors declare no conflict of interest.\\

\end{document}